\DeclareMathOperator*{\supp}{\mathrm{supp}}
\DeclareMathOperator*{\dist}{\mathrm{dist}}
\def\ol{\overline}
\def\mb{\mathbb}
\def\Om{\Omega}
\def\ra{\rightarrow}
\def\df{{\rm d}}
\def\pt{\partial}
\def\mcH{\mathcal{H}}
\newtheorem{lemma}{Lemma}[section]
\newtheorem{theorem}[lemma]{Theorem}
\newtheorem{remark}[lemma]{Remark}
\newtheorem{proposition}[lemma]{Proposition}
\theoremstyle{definition}
\newtheorem{definition}[lemma]{Definition}
\begin{document}
\title{The fundamental gap of a kind of two dimensional sub-elliptic operator  }

\date{}

\author{
	{\normalsize Hongli Sun$^{1}$, Donghui Yang$^2$, Xu Zhang$^{2,*}$ }\\
	\vspace{0.2cm}
	\small\it $^1$ School of Mathematics Physics and Big data, Chongqing University of Science and Technology, \\
	\small\it  Chongging 401331, China\\
	\small\it $^2$ School of Mathematics and Statistics, Central South University, Changsha 410083, China \\}

\footnotetext[2]{*Corresponding author: xuzhang174@163.com }

\maketitle
\begin{center}
\begin{abstract}
 
 This paper is concerned at the minimization fundamental gap problem for a class of two-dimensional degenerate sub-elliptic operators. We establish existence results for weak solutions, Sobolev embedding theorem and spectral theory of sub-elliptic operators. We provide the existence and characterization theorems for extremizing potentials $V(x)$ when $V(x)$ is subject to $L^\infty$ norm constraint.

 \vspace{0.2cm}

{\bf Keywords:}~ sub-elliptic operator, fundamental gap,  optimal potentials \vspace{0.2cm}  

{\bf AMS subject classifications:}~  35P15; 47A75 

\end{abstract}

\end{center}

\section{Introduction}

The eigenvalue extremum problem of Schr{\"o}dinger operator with Dirichlet/Neumann boundary was initiated at least in the early 1960s \cite{keller1961lower}. The difference between the first two eigenvalues, $\lambda_2-\lambda_1$, is called the fundamental gap, it is of great significance in quantum mechanics, statistical mechanics, and quantum field theory. We give a brief overview of this subject and outline some related work.

Van  den Berg \cite{van1983condensation} observed that $\lambda_2-\lambda_1\ge \frac{3\pi^2}{d^2}$ on many convex domains,  where $d$ is the  diameter of domain, this view has also been independently suggested by Yau \cite{yau1986nonlinear}, as well as Ashbaugh and Benguria \cite{ashbaugh1989optimal}, which is called fundamental gap  conjecture. Ashbaugh and Benguria  \cite{ashbaugh1989optimal} proved that the conjecture holds if the potential function is single-well symmetric (not necessarily convex).  Later, Horv\'ath \cite{horvath2003first} removed the symmetry hypothesis and allowed  the potential function to be single-well with the middle transition point on the interval.  Lavine \cite{lavine1994eigenvalue}  proved the fundamental gap conjecture for Schr{\"o}dinger operators equipped with homogeneous Dirichlet or Neumann boundary conditions on a bounded interval among the class of convex bounded potentials, and this result  was completed for Robin boundary conditions in \cite{andrews2021fundamental}. More results could be found in \cite{Henrot2006Extremum}.

 Yau \cite{singer1985estimate} et al. obtained that the gap is bounded below by $\frac{\pi^2}{4d^2}$ in the  high-dimensional case by utilizing the fact that the first eigenfunction is logarithmic concave. Subsequently, many studies improved the the lower bound of the gap, it was not until 2011 that Andrews and Clutterbuck \cite{Ben2011Proof}  completely solved this conjecture. The same method has been further exploited in
 the paper \cite{andrews2020non}.     In particular,  Laplacian fundamental gap problem  \cite{seto2019sharp,Bourni2021The,he2020fundamental,dai2021fundamental} has been extended to particular manifolds and the corresponding detailed characterization is given. The lower bound of eigenvalue gap of vibrating string is also discussed in \cite{2007The,chen2014lower,qi2020extremal}. For the latest references, we can refer to \cite{kerner2021lower,kerner2021lowerAlower,el2022optimal}.

For more general elliptic operators, Wolfson \cite{Wolfson2015eigenvalue} estimated the eigenvalue gap for a class of nonsymmetric second-order linear elliptic operators. Cheng et.al \cite{cheng2014dual} considered the eigenvalue gap of the $p$-Laplacian eigenvalue problems, and obtained the minimizer of the eigenvalue gap for the single-well potential function. Significantly, Tan and Liu \cite{Tan2022Estimates} considered estimates for eigenvalues of a class of fourth order degenerate elliptic operators with a singular potential, Chen et al. \cite{ChenHua2019lower} provided the lower bounds of Dirichlet eigenvalues for a class of higher order degenerate elliptic operators. However, they mainly provided estimates for each eigenvalue, and need to assume the elliptic operator as a sum of squares of vector fields $\{X_j\}_{j=1}^m$, and the vector fields $\{X_j\}_{j=1}^m$ demand the  H{\"o}rmander's condition and M\'etivier's condition.

In general, as far as we know, there are relatively few studies on more general elliptic operators involving the fundamental gap. Inspired by the above work and \cite{Ashbaugh1991ON,Henrot2006Extremum}, we consider the following  degenerate sub-elliptic equation
\begin{equation}\label{ssss1}
	\begin{cases}
		-\left(\partial_{x_1x_1}u+h(x_1)\partial_{x_2x_2}u\right)+Vu=\lambda u, & x\in \Omega, \\
		u=0, & x\in\partial \Omega,
	\end{cases} 
\end{equation}
 where 
 \begin{equation*}
 	\Omega=\left\{x=(x_1,x_2)\in \mathbb{R}^2\mid 0<x_1<1, 0<x_2<1\right\},
 \end{equation*}
 $h:[0,1]\ra \mb{R}$ is a bounded function with $h(0)=0$, $h(x_1)>0$ in $(0,1]$ and $h(x_1)\in  C^1(0,1]$,  moreover there exists a constant $C$ such that
\begin{equation}\label{e2}
\sup_{\substack{x \in \Omega \\ 0<r \le d }}    \frac{\int_{B \cap \Omega}h(x_1)^{-1}dx}{|B\cap \Omega|^\theta}   \le C  \ \mbox{for some}  \  0<\theta<1,
\end{equation}
where $B:=B(x,r)$ represents the ball with point $x \in \Omega$ as the center of the ball and radius r,  $d$ is the diameter of $\Omega$. For example, $h(x_1)=x_1^\alpha$ with $\alpha \in (0,1)$.

Denote 
\begin{equation}\label{e3}
	L=-\left(\partial_{x_1x_1}+h(x_1)\partial_{x_2x_2}\right)+V, 
\end{equation}
 the focus of this paper is to search for the existence and optimality of the minimization problem
  \begin{equation*}
 	\inf\limits_{V\in S }\Gamma(V)
 \end{equation*} 
related to operator \eqref{e3} under the condition $V\in S$, where $\Gamma(V)=\lambda_2(V)-\lambda_1(V)$, and 
\begin{equation*}
	S=\{V\in L^{\infty}(\Omega)\mid  m\le V(x) \le M \mbox{ a.e.}\},
\end{equation*}
$m$ and $M$ are non-negative given constants.

It is not hard to see that we do not assume the operator $L$ is written as a sum of squares of vector fields although it allows for this structure. Furthermore, we do not apply the additional metric structures, the background Euclidean metric is employed only in this paper.

\section{Weighted Sobolev space}

In this section, we provide the solvability theory for the degenerate elliptic equation \eqref{e1} on which the spectral theory and subsequent applications are based on it. The purpose of this theory is to establish the weak well-posedness for the problem \eqref{e1}:
\begin{equation}\label{e1}
	\begin{cases}
		-\left(\partial_{x_1x_1}u+h(x_1)\partial_{x_2x_2}u\right)+Vu=f, & x\in \Omega, \\
		u=0, & x\in\partial \Omega,
	\end{cases} 
\end{equation}
we explore ways of showing the existence, uniqueness of a proper notion of weak solution $u$ for each given $f \in L^2(\Omega)$. The space we mentioned below may be involved in many works such as \cite{2009Maximum,sawyer2010degenerate,cavalheiro2008weighted,2011ACompact}, but we will introduce more profound results.

Define
\begin{equation*}
	\mcH_h^1(\Omega)=\left\{u\in L^2(\Omega)\Bigm| \partial_{x_1}u\in L^2(\Omega), \sqrt{h(x_1)}\partial_{x_2}u\in L^2(\Omega)\right\}, 
\end{equation*}
and 
\begin{equation*}
	(u,v)_{\mcH_h^1(\Omega)}=\int_\Omega uvdx +\int_\Omega(\pt_{x_1}u)(\pt_{x_1}v)dx+\int_\Omega\left(\sqrt{h(x_1)}  \pt_{x_2}  u\right)\left(\sqrt{h(x_1)} \pt_{x_2}v\right)dx,
\end{equation*}
from which we define
\begin{equation*}
	\|u\|_{\mcH_h^1(\Omega)}=\left(\|u\|_{L^2(\Omega)}^2+\|\pt_{x_1}u\|_{L^2(\Omega)}^2+\|\sqrt{h(x_1)}\pt_{x_2}u\|_{L^2(\Omega)}^2\right)^\frac{1}{2}. 
\end{equation*}
It is not hard to find that $\|u\|_{\mcH_h^1(\Omega)}=\sqrt{(\cdot,\cdot)_{\mcH_h^1(\Omega)}}$. 
Furthermore, let $\mcH_{h,0}^1(\Omega)$ denote the closure of $C_0^\infty(\Omega)$ in the space $\mcH_h^1(\Omega)$, that is, 
\begin{equation*}
	\mcH_{h,0}^1(\Omega)=\overline{C_0^\infty(\Omega)}^{\mcH_h^1(\Omega)}. 
\end{equation*}

Throughout this paper,  let  $\|\cdot\|$ denotes the norm,  $(\cdot,\cdot)$ denotes the inner product.

\begin{lemma}\label{L.2.1}
	The space $(\mcH_h^1(\Omega),(\cdot,\cdot)_{\mcH_h^1(\Omega)})$ is a Hilbert space.   
\end{lemma}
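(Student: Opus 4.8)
The plan is to check first that $(\cdot,\cdot)_{\mcH_h^1(\Omega)}$ really is an inner product and then to establish completeness, the latter being the substantive point. Bilinearity and symmetry are immediate from the definition, and positive definiteness follows from the trivial bound $(u,u)_{\mcH_h^1(\Omega)}\ge\|u\|_{L^2(\Omega)}^2$, so that $(u,u)_{\mcH_h^1(\Omega)}=0$ forces $u=0$ in $L^2(\Omega)$; the relation $\|u\|_{\mcH_h^1(\Omega)}=\sqrt{(u,u)_{\mcH_h^1(\Omega)}}$ is then clear. It thus remains to show that every Cauchy sequence in $\mcH_h^1(\Omega)$ converges in the $\mcH_h^1(\Omega)$ norm.

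Given a Cauchy sequence $\{u_n\}$, I would observe that the three sequences $\{u_n\}$, $\{\partial_{x_1}u_n\}$ and $\{\sqrt{h(x_1)}\,\partial_{x_2}u_n\}$ are each Cauchy in $L^2(\Omega)$, hence converge in $L^2(\Omega)$ to some $u$, $v$ and $w$ respectively. The next step is to identify $v$ and $w$ with the relevant derivatives of $u$. For $v$: for every $\varphi\in C_0^\infty(\Omega)$ one has $\int_\Omega u_n\,\partial_{x_1}\varphi\,dx=-\int_\Omega(\partial_{x_1}u_n)\varphi\,dx$, and letting $n\to\infty$ (using the $L^2$ convergences together with the boundedness and compact support of $\varphi$ and $\partial_{x_1}\varphi$) gives $\int_\Omega u\,\partial_{x_1}\varphi\,dx=-\int_\Omega v\,\varphi\,dx$, i.e.\ $\partial_{x_1}u=v\in L^2(\Omega)$ in the distributional sense.

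The identification of $w$ is where some care is needed, since the weight $h(x_1)$ degenerates as $x_1\to0^+$ and $1/\sqrt{h(x_1)}$ is in general unbounded on $\Omega$; this is the only genuine obstacle. I would handle it by working locally away from the degeneracy set $\{x_1=0\}\subset\partial\Omega$: any compact $K\subset\Omega$ lies in a strip $\{\varepsilon\le x_1\le 1-\varepsilon\}$ on which $h$, being continuous and strictly positive on $(0,1]$, is bounded below by some $c_K>0$, so that $\partial_{x_2}u_n=(\sqrt{h(x_1)}\,\partial_{x_2}u_n)/\sqrt{h(x_1)}\to w/\sqrt{h(x_1)}$ in $L^2(K)$. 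Testing against $\varphi\in C_0^\infty(\Omega)$ exactly as before, and using that being a weak derivative is a local condition, I would conclude $\partial_{x_2}u=w/\sqrt{h(x_1)}$ as a distribution on $\Omega$, whence $\sqrt{h(x_1)}\,\partial_{x_2}u=w\in L^2(\Omega)$. Therefore $u\in\mcH_h^1(\Omega)$, and since each of the three component sequences converges in $L^2(\Omega)$ to the corresponding component of $u$, we obtain $\|u_n-u\|_{\mcH_h^1(\Omega)}\to0$, which finishes the argument.
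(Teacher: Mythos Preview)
Your argument is correct, but you work harder than necessary on the identification of $w$. The paper avoids the localization step entirely by exploiting two features of the weight that you do not use: $h$ is \emph{bounded} on $[0,1]$, and $h$ depends only on $x_1$, so $\partial_{x_2}\bigl(\sqrt{h(x_1)}\varphi\bigr)=\sqrt{h(x_1)}\,\partial_{x_2}\varphi$. Thus for any $\varphi\in C_0^\infty(\Omega)$ one can simply pass the weight onto the test function,
\[
\int_\Omega\bigl(\sqrt{h(x_1)}\,\partial_{x_2}u_n\bigr)\varphi\,dx
=-\int_\Omega u_n\bigl(\sqrt{h(x_1)}\,\partial_{x_2}\varphi\bigr)\,dx,
\]
and let $n\to\infty$ on both sides directly, since $\sqrt{h(x_1)}\,\partial_{x_2}\varphi\in L^\infty_c(\Omega)\subset L^2(\Omega)$. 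This yields $w=\sqrt{h(x_1)}\,\partial_{x_2}u$ in the sense of distributions without ever dividing by $\sqrt{h}$, so the degeneracy at $x_1=0$ never enters. Your route via compact subsets and the local lower bound on $h$ is more robust --- it would handle weights depending on both variables or merely locally bounded --- but for this specific $h$ it is an unnecessary detour.
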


\begin{proof}	
	Firstly, we easily verify that $(\mcH_h^1(\Omega),(\cdot,\cdot)_{\mcH_h^1(\Omega)})$ is an inner space. Let $\{u_n\}_{n\in\mb{N}}\subset\mcH_h^1(\Omega)$ be a Cauchy sequence, so that
$\{u_n\}_{n\in\mb{N}},\{\pt_{x_1}u_n\}_{n\in\mb{N}},\{\sqrt{h(x_1)}\pt_{x_2}u_n\}_{n\in\mb{N}}$  are Cauchy sequences in $L^2(\Omega)$.  Then there exist $u,v,w\in L^2(\Omega)$ such that 
\begin{equation*}
	u_n\ra u, \pt_{x_1}u_n\ra v, \sqrt{h(x_1)}\pt_{x_2}u_n\ra w \mbox{ strongly in } L^2(\Omega). 
\end{equation*}
For each $\varphi\in C_0^\infty(\Omega)$, we have 
\begin{equation*}
	\begin{split}
		\int_\Omega w\varphi dx \leftarrow \int_\Omega \left(\sqrt{h(x_1)}\pt_{x_2}u_n\right)\varphi dx =-\int_\Omega u_n\left(\sqrt{h(x_1)}\pt_{x_2}\varphi\right)dx \ra -\int_\Omega  u\left(\sqrt{h(x_1)}\pt_{x_2}\varphi\right)dx 
	\end{split}
\end{equation*}
in the sense of distribution, which implies that 
\begin{equation*}
	w=\sqrt{h(x_1)}\pt_{x_2}u
\end{equation*}
in the sense of distribution. Naturally, $w=\sqrt{h(x_1)}\pt_{x_2}u$ in $L^2(\Omega)$ since $w\in L^2(\Omega)$. Implement the same method again then $v=\pt_{x_1}u$ in $L^2(\Omega)$ is attained. All these imply that 
\begin{equation*}
	u_n\ra u \mbox{ strongly in }\mcH_h^1(\Omega).\qedhere
\end{equation*}
\end{proof}

\begin{proposition}\label{P.2.1}
	The Sobolev space $H^1(\Omega)$ is a subspace of $\mcH_h^1(\Omega)$.  Generally, $H^1(\Omega)\subsetneq \mcH_h^1(\Omega)$. 
\end{proposition}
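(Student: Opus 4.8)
The plan is to establish the two claims separately.

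For $H^1(\Omega)\subseteq\mcH_h^1(\Omega)$: let $|h|\le C_h$ on $[0,1]$. If $u\in H^1(\Omega)$ then $u,\partial_{x_1}u\in L^2(\Omega)$ by definition, and $\int_\Omega h(x_1)|\partial_{x_2}u|^2\,dx\le C_h\|\partial_{x_2}u\|_{L^2(\Omega)}^2<\infty$, so $\sqrt{h(x_1)}\,\partial_{x_2}u\in L^2(\Omega)$; hence $u\in\mcH_h^1(\Omega)$, and in fact $\|u\|_{\mcH_h^1(\Omega)}\le\max\{1,C_h\}^{1/2}\|u\|_{H^1(\Omega)}$, so the inclusion is a continuous linear embedding. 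This step is routine.

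For the strict inclusion I would produce an explicit $u\in\mcH_h^1(\Omega)\setminus H^1(\Omega)$, taking for concreteness $h(x_1)=x_1^\alpha$ with $\alpha\in(0,1)$ (the construction goes through whenever $\sup_{(0,\delta)}h\to0$ as $\delta\to0^+$). Such a $u$ must satisfy $u,\partial_{x_1}u\in L^2$ and $\sqrt{h}\,\partial_{x_2}u\in L^2$ while $\partial_{x_2}u\notin L^2$, i.e.\ $\partial_{x_2}u$ must blow up as $x_1\to0$ at a rate absorbed by the weight $x_1^{\alpha/2}$ but not itself square-integrable. I would take a lacunary sum concentrated at the degeneracy,
\[
u(x_1,x_2)=\sum_{j\ge1}a_j\,\beta_j(x_1)\,\sqrt{2}\sin(n_j\pi x_2),
\]
where $\beta_j\in C_0^\infty(I_j)$ is an $L^2(0,1)$-normalized bump supported in the dyadic interval $I_j=(2^{-j-1},2^{-j})$, so that $\|\beta_j'\|_{L^2}\asymp2^j$, with decaying amplitudes $a_j=2^{-j}/j$ and fast frequencies $n_j=\lceil2^jj\rceil$. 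Since the slabs $I_j\times(0,1)$ are pairwise disjoint and each $\beta_j$ vanishes to infinite order at the endpoints of $I_j$, one has $u\in C^\infty(\Omega)$, $u$ is bounded, and every relevant $L^2(\Omega)$-norm splits into a sum over $j$; using $x_1^\alpha\le2^{-j\alpha}$ on $I_j$, a direct computation then yields
\begin{gather*}
\|u\|_{L^2(\Omega)}^2\asymp\sum_j a_j^2<\infty,\qquad
\|\partial_{x_1}u\|_{L^2(\Omega)}^2\asymp\sum_j a_j^2\,2^{2j}=\sum_j j^{-2}<\infty,\\
\|\sqrt{h}\,\partial_{x_2}u\|_{L^2(\Omega)}^2\lesssim\sum_j a_j^2\,n_j^2\,2^{-j\alpha}\asymp\sum_j 2^{-j\alpha}<\infty,\qquad
\|\partial_{x_2}u\|_{L^2(\Omega)}^2\asymp\sum_j a_j^2\,n_j^2=\sum_j 1=\infty,
\end{gather*}
so $u\in\mcH_h^1(\Omega)\setminus H^1(\Omega)$.

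The only genuine obstacle is that the constraint $\partial_{x_1}u\in L^2$ carries no weight: a single-scale guess such as $u=x_1^{\gamma}\sin(x_2/x_1)$ is useless, because differentiating the $x_2$-oscillation in $x_1$ produces a term of size comparable to $\partial_{x_2}u$, so $\partial_{x_1}u\in L^2$ would already force $\partial_{x_2}u\in L^2$. The remedy is to decouple the two length scales — localizing each oscillatory mode in a thin $x_1$-slab pressed against the degenerate edge $\{x_1=0\}$, while letting its frequency $n_j$ run a factor $j$ faster than the reciprocal slab width. That extra factor makes $\sum_j a_j^2 n_j^2$ diverge, whereas the favorable factor $2^{-j\alpha}$ picked up from the weight on the slab $I_j$ more than compensates in the weighted norm; once the scales are separated everything reduces to the elementary bookkeeping above.
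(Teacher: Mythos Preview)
Your proof is correct, but the paper takes a much more direct route to the strict inclusion. Instead of a lacunary oscillatory series, it simply writes down the closed-form function $u(x_1,x_2)=(x_1^2+x_2)^{1/4}$ (for $\sqrt{h(x_1)}=x_1^{\gamma}$, $\gamma\in(0,\tfrac12)$) and checks the four integrals by hand: the corner singularity at $(0,0)$ makes $\partial_{x_2}u=\tfrac14(x_1^2+x_2)^{-3/4}$ just barely fail to be in $L^2(\Omega)$, while the weight factor $x_1^{\gamma}$ restores integrability via the bound $x_1^{2\gamma}\le(x_1^2+x_2)^{\gamma}$. Your closing remark that a ``single-scale'' guess is useless applies only to \emph{oscillatory} candidates such as $f(x_1)\sin(x_2/x_1)$, where $\partial_{x_1}$ indeed sees the same frequency as $\partial_{x_2}$; for a power-type corner singularity the two derivatives blow up at genuinely different rates, so no scale decoupling is needed. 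What your construction buys is generality --- it goes through under the bare hypothesis $\sup_{(0,\delta)}h\to0$ and isolates the mechanism into independent dyadic blocks --- at the price of more bookkeeping. The paper's example is shorter and requires nothing beyond elementary calculus, but is tied to the specific algebraic form of $h$.
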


\begin{proof}
	
	1. Let $u\in H^1(\Omega)$. Then $u, \pt_{x_1}u, \pt_{x_2}u\in L^2(\Omega)$. Note that
	\begin{equation}\label{21.07.23.2}
		\begin{split}
		\|u\|_{\mcH_h^1(\Omega)}
		&=\left(\|u\|_{L^2(\Omega)}^2+\|\pt_{x_1}u\|_{L^2(\Omega)}^2+\|\sqrt{h(x_1)}\pt_{x_2}u\|_{L^2(\Omega)}^2\right)^\frac{1}{2}\\
		&\leq C\left(\|u\|_{L^2(\Omega)}^2+\|\pt_{x_1}u\|_{L^2(\Omega)}^2+\|\pt_{x_2}u\|_{L^2(\Omega)}^2\right)^\frac{1}{2}=C\|u\|_{H^1(\Omega)}, 
		\end{split}
	\end{equation}
hence $u\in \mcH_h^1(\Omega)$. 

2. Take $\sqrt{h(x_1)}=x_1^\gamma, \gamma\in (0,\frac{1}{2})$, consider the function  $u({x_1},{x_2})=\left(x_1^2+{x_2}\right)^\frac{1}{4}$ on $\Omega$. We observe that 
\begin{equation*}
	\begin{split}
		\partial_{x_1}u=\frac{x_1}{2}\left(x_1^2+{x_2}\right)^{-\frac{3}{4}},\quad \partial_{x_2}u=\frac{1}{4}\left(x_1^2+{x_2}\right)^{-\frac{3}{4}},
	\end{split}
\end{equation*} 
and 
\begin{equation*}
	\begin{split}
		\|u\|_{L^2}^2
		&=\int_\Omega (x_1^2+{x_2})^\frac{1}{2} dx\leq \sqrt{2},\\
		\|\partial_{x_1}u\|_{L^2}^2
		&=\frac{1}{4}\int_\Omega \frac{x_1^2}{\left(x_1^2+{x_2}\right)^\frac{3}{2}}dx\leq \frac{1}{4}\int_\Omega \frac{1}{\sqrt{x_1^2+{x_2}}}dx =\frac{1}{4}\int_0^1\int_{x_1^2}^{x_1^2+1}\frac{1}{\sqrt{z}}dzdx_1\\
		&=\frac{1}{4}\int_0^1 2\sqrt{z}\Big|_{x_1^2}^{x_1^2+1} dx_1\leq \sqrt{2},\\
		\|\sqrt{h(x_1)}\partial_{x_2}u\|_{L^2}^2
		&=\frac{1}{16}\int_\Omega \frac{x_1^{2\gamma}}{\left(x_1^2+{x_2}\right)^\frac{3}{2}}dx\leq \frac{1}{16}\int_\Omega \frac{(x_1^2+{x_2})^\gamma}{\left(x_1^2+{x_2}\right)^\frac{3}{2}}dx=\frac{1}{16}\int_\Omega\frac{1}{(x_1^2+{x_2})^{\frac{3}{2}-\gamma}}dx\\
		&=\frac{1}{16}\int_0^1\int_{x_1^2}^{x_1^2+1}\frac{1}{z^{\frac{3}{2}-\gamma}}dzdx_1=\frac{1}{16}\left(-\frac{1}{2}+\gamma\right)^{-1}\int_0^1z^{-\frac{1}{2}+\gamma}\Big|_{x_1^2}^{x_1^2+1}dx_1<+\infty, 
	\end{split}
\end{equation*}
these show that $u\in \mcH_h^1(\Omega)$. However,
\begin{equation*}
	\begin{split}
		\|\partial_{x_2}u\|_{L^2}^2
		&=\frac{1}{16}\int_\Om \frac{1}{\left(x_1^2+{x_2}\right)^\frac{3}{2}} dx=\frac{1}{16}\int_0^1 dx_1\int_0^1\frac{1}{\left(x_1^2+{x_2}\right)^\frac{3}{2}}d {x_2}=\frac{1}{16}\int_0^1 dx_1\int_{x_1^2}^{x_1^2+1}\frac{1}{z^\frac{3}{2}}dz\\
		&=-\frac{1}{8}\int_0^1z^{-\frac{1}{2}}\Big|_{x_1^2}^{x_1^2+1}dx_1=-\frac{1}{8}\int_0^1\left(\frac{1}{\sqrt{x_1^2+1}}-\frac{1}{x_1}\right)dx_1=+\infty,
	\end{split}
\end{equation*}
this implies that $u\notin H^1(\Omega)$. 
\end{proof}

\begin{lemma}\label{202306061841}
$\mcH_h^1(\Omega)$ is continuously embedded into $W^{1,1}(\Omega)$.
\end{lemma}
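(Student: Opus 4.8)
The plan is to verify directly the three conditions defining $W^{1,1}(\Omega)$ for an arbitrary $u\in\mcH_h^1(\Omega)$ --- namely $u\in L^1(\Omega)$, $\pt_{x_1}u\in L^1(\Omega)$ and $\pt_{x_2}u\in L^1(\Omega)$ --- together with a bound $\|u\|_{W^{1,1}(\Omega)}\le C\|u\|_{\mcH_h^1(\Omega)}$. Since $\Omega$ is bounded with $|\Omega|=1$, the inclusions $\|u\|_{L^1(\Omega)}\le\|u\|_{L^2(\Omega)}$ and $\|\pt_{x_1}u\|_{L^1(\Omega)}\le\|\pt_{x_1}u\|_{L^2(\Omega)}$ follow at once from the Cauchy--Schwarz inequality, so the only genuine issue is the $x_2$-derivative, for which the definition of $\mcH_h^1(\Omega)$ provides control only of the weighted quantity $\sqrt{h(x_1)}\,\pt_{x_2}u$ in $L^2(\Omega)$.

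For this term I would write $\pt_{x_2}u = h(x_1)^{-1/2}\cdot\bigl(\sqrt{h(x_1)}\,\pt_{x_2}u\bigr)$ on $\Omega$, which is legitimate because $h>0$ on $(0,1]$ and hence a.e.\ in $\Omega$, and then apply the Cauchy--Schwarz inequality:
\begin{equation*}
\int_\Omega|\pt_{x_2}u|\,dx\le\Bigl(\int_\Omega h(x_1)^{-1}\,dx\Bigr)^{1/2}\Bigl(\int_\Omega h(x_1)\,|\pt_{x_2}u|^2\,dx\Bigr)^{1/2}=\Bigl(\int_\Omega h(x_1)^{-1}\,dx\Bigr)^{1/2}\bigl\|\sqrt{h(x_1)}\,\pt_{x_2}u\bigr\|_{L^2(\Omega)}.
\end{equation*}
It therefore remains to show $\int_\Omega h(x_1)^{-1}\,dx<+\infty$, and this is exactly where hypothesis \eqref{e2} enters: taking the center to be any point $x\in\Omega$ and the radius $r=d=\diam(\Omega)$, one has $\Omega\subseteq B(x,d)$, so $B\cap\Omega=\Omega$, and \eqref{e2} yields $\int_\Omega h(x_1)^{-1}\,dx\le C|\Omega|^{\theta}=C<+\infty$.

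Putting the three estimates together gives $\|u\|_{W^{1,1}(\Omega)}\le C'\|u\|_{\mcH_h^1(\Omega)}$ with $C'$ depending only on $C$ and $|\Omega|$, which is the claimed continuous embedding. I do not expect any real obstacle here: the argument is a one-line Cauchy--Schwarz split, and the single nontrivial input, finiteness of $\int_\Omega h(x_1)^{-1}\,dx$, is built into the standing assumption \eqref{e2}. The only point requiring a word of care is that $\pt_{x_2}u$, a priori merely a distribution, is genuinely represented by an $L^1$ function; but this is immediate once $\sqrt{h(x_1)}\,\pt_{x_2}u$ is known to be an $L^2$ function and $h(x_1)^{-1/2}$ is finite a.e.\ on $\Omega$.
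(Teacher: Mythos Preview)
Your proof is correct and follows essentially the same approach as the paper: both arguments reduce to controlling $\|\pt_{x_2}u\|_{L^1(\Omega)}$ via the Cauchy--Schwarz (H\"older) split $\pt_{x_2}u=h(x_1)^{-1/2}\cdot\sqrt{h(x_1)}\,\pt_{x_2}u$, using the finiteness of $\int_\Omega h(x_1)^{-1}\,dx$ guaranteed by \eqref{e2}. Your write-up is in fact more complete, since you also spell out the trivial $L^1$ bounds for $u$ and $\pt_{x_1}u$ and justify explicitly why \eqref{e2} yields $\int_\Omega h(x_1)^{-1}\,dx<\infty$.
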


\begin{proof}
We observe that $\int_{\Omega} h(x_1)^{-1}dx$ is bounded from \eqref{e2}. For any $u \in  \mcH_h^1(\Omega)$, thanks to H{\"o}lder inequality,
	\begin{equation*}
	\begin{split}		
		\int_{\Omega} |\partial_{x_2} u|dx&=\int_{\Omega}\left(\sqrt{h(x_1)}\right)^{-1} \sqrt{h(x_1)} |\partial_{x_2} u|dx\\
		&\le \left(\int_{\Omega} h(x_1)^{-1} dx\right)^{\frac{1}{2}} \left(\int_{\Omega} |h(x_1)\partial_{x_2}u|^2 dx\right)^{\frac{1}{2}}\\ 
		&\le C \|h(x_1)\partial_x u\|_{L^2(\Omega)}.
	\end{split}
\end{equation*}
Therefore, $\mcH_h^1(\Omega) \hookrightarrow W^{1,1}(\Omega)$.
\end{proof}

\begin{lemma}\label{202306061911}
$H_0^1(\Omega) \subset \mcH_{h,0}^1(\Omega) \subset \mcH_h^1(\Omega) \cap W_0^{1,1}(\Omega)$.
\end{lemma}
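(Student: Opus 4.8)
The statement asserts three inclusions; the last two are essentially formal given the earlier results, so the plan is to dispatch those first and then focus on the genuine content, namely $H_0^1(\Omega)\subset\mcH_{h,0}^1(\Omega)$. For the chain $\mcH_{h,0}^1(\Omega)\subset\mcH_h^1(\Omega)$, this is immediate since $\mcH_{h,0}^1(\Omega)$ is by definition the closure of $C_0^\infty(\Omega)$ inside $\mcH_h^1(\Omega)$. For $\mcH_{h,0}^1(\Omega)\subset W_0^{1,1}(\Omega)$, I would invoke Lemma \ref{202306061841}: the continuous embedding $\mcH_h^1(\Omega)\hookrightarrow W^{1,1}(\Omega)$ sends the $\mcH_h^1$-closure of $C_0^\infty(\Omega)$ into the $W^{1,1}$-closure of $C_0^\infty(\Omega)$, which is exactly $W_0^{1,1}(\Omega)$; combined with the previous inclusion this gives $\mcH_{h,0}^1(\Omega)\subset\mcH_h^1(\Omega)\cap W_0^{1,1}(\Omega)$.

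The substantive inclusion is $H_0^1(\Omega)\subset\mcH_{h,0}^1(\Omega)$. The plan is: take $u\in H_0^1(\Omega)$, so by definition there is a sequence $\varphi_n\in C_0^\infty(\Omega)$ with $\varphi_n\to u$ in $H^1(\Omega)$. By the norm comparison established in the proof of Proposition \ref{P.2.1} (inequality \eqref{21.07.23.2}), we have $\|\varphi_n-\varphi_m\|_{\mcH_h^1(\Omega)}\le C\|\varphi_n-\varphi_m\|_{H^1(\Omega)}$, so $\{\varphi_n\}$ is Cauchy in $\mcH_h^1(\Omega)$; since that space is complete (Lemma \ref{L.2.1}), $\varphi_n$ converges in $\mcH_h^1(\Omega)$ to some limit, and because $\varphi_n\in C_0^\infty(\Omega)$ this limit lies in $\mcH_{h,0}^1(\Omega)$ by definition of the latter as a closure. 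It remains to check the limit is $u$ itself: $\varphi_n\to u$ in $H^1(\Omega)\subset\mcH_h^1(\Omega)$ and the embedding is continuous, so $\varphi_n\to u$ in $\mcH_h^1(\Omega)$ as well; uniqueness of limits identifies the $\mcH_{h,0}^1$-limit with $u$, giving $u\in\mcH_{h,0}^1(\Omega)$.

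The only mild subtlety — and the closest thing to an obstacle — is making sure the two notions of convergence are compatible, i.e. that a sequence converging in $H^1(\Omega)$ to $u$ also converges in $\mcH_h^1(\Omega)$ to the same $u$ rather than to some other element; this is handled cleanly by the continuity of the inclusion $H^1(\Omega)\hookrightarrow\mcH_h^1(\Omega)$ from Proposition \ref{P.2.1}, which forces the $\mcH_h^1$-limit to coincide with the $H^1$-limit. Everything else is bookkeeping about closures and completeness, so I do not anticipate any real difficulty.
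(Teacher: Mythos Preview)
Your proposal is correct and follows essentially the same approach as the paper: both inclusions are obtained by pushing an approximating sequence in $C_0^\infty(\Omega)$ through the continuous embeddings $H^1(\Omega)\hookrightarrow\mcH_h^1(\Omega)$ (from Proposition~\ref{P.2.1}) and $\mcH_h^1(\Omega)\hookrightarrow W^{1,1}(\Omega)$ (from Lemma~\ref{202306061841}). The paper is marginally more direct --- it writes $\|u_n-u\|_{\mcH_h^1(\Omega)}\le C\|u_n-u\|_{H^1(\Omega)}\to 0$ in one line without the Cauchy/completeness detour --- but your argument is equivalent.
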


\begin{proof}
For $u \in H_0^1(\Omega)$, there is a sequence $\{u_n\}_{n\in\mb{N}} \subset C_0^\infty(\Omega)$ such that $\|u_n-u\|_{H^1(\Omega)} \to 0$, $n \to \infty$, and we observe that 
\begin{equation*}
	\|u_n-u\|_{\mcH_h^1(\Omega)} \le C\|u_n-u\|_{H^1(\Omega)} \to 0, \ n \to \infty,
\end{equation*}	
therefore $u \in \mcH_{h,0}^1(\Omega)$.

For $u \in \mcH_{h,0}^1(\Omega)$, there is a sequence $\{u_n\}_{n\in\mb{N}} \subset C_0^\infty(\Omega)$ such that $\|u_n-u\|_{\mcH_h^1(\Omega)} \to 0$, $n \to \infty$. And from Lemma \ref{202306061841}
\begin{equation*}
	\|u_n-u\|_{W^{1,1}(\Omega)} \le C\|u_n-u\|_{\mcH_h^1(\Omega)} \to 0, \ n \to \infty,
\end{equation*}	
so that we have $ \mcH_{h,0}^1(\Omega) \subset \mcH_h^1(\Omega) \cap W_0^{1,1}(\Omega)$.		
\end{proof}

From Lemma \ref{202306061911}, we know that it makes sense to consider Dirichlet condition problem \eqref{e1}.

\begin{lemma}\label{L.2.4}
	For any $u\in \mcH_0^1(\Omega)$, 
\begin{equation*}
	\tilde u(x)=
	\begin{cases}
		u(x), & x\in \Omega,\\
		0, & x\in \mb{R}^2 \backslash \Omega,
	\end{cases}
\end{equation*}
we have $\tilde u \in \mcH_h^1(\mb{R}^2)$.
\end{lemma}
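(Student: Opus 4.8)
The statement to prove is Lemma~\ref{L.2.4}: for $u \in \mcH^1_{h,0}(\Omega)$ (the excerpt writes $\mcH^1_0(\Omega)$, evidently meaning $\mcH^1_{h,0}(\Omega)$), the zero extension $\tilde u$ to $\mb{R}^2$ lies in $\mcH^1_h(\mb{R}^2)$.

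\medskip

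The plan is to argue by density and closedness, exactly in the spirit of the proofs of Lemma~\ref{202306061911} and Lemma~\ref{202306061841}. First I would record that $\tilde u \in L^2(\mb{R}^2)$ trivially, since $\|\tilde u\|_{L^2(\mb{R}^2)} = \|u\|_{L^2(\Omega)}$, so the only issue is to identify the weak derivatives $\partial_{x_1}\tilde u$ and $\sqrt{h(x_1)}\,\partial_{x_2}\tilde u$ on all of $\mb{R}^2$ and check they are square-integrable. The natural candidates are the zero extensions of $\partial_{x_1}u$ and of $\sqrt{h(x_1)}\,\partial_{x_2}u$ respectively; call them $v$ and $w$. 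These are automatically in $L^2(\mb{R}^2)$ with the same norms as on $\Omega$, so once the identification is done we get $\|\tilde u\|_{\mcH^1_h(\mb{R}^2)} = \|u\|_{\mcH^1_h(\Omega)}$, which is even stronger than what is claimed.

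\medskip

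The identification step is where the real work sits. Take $u \in \mcH^1_{h,0}(\Omega)$ and pick $\{u_n\} \subset C_0^\infty(\Omega)$ with $\|u_n - u\|_{\mcH^1_h(\Omega)} \to 0$. Let $\tilde u_n$ be the zero extensions; these are in $C_0^\infty(\mb{R}^2)$ (hence in $\mcH^1_h(\mb{R}^2)$), and for test functions $\varphi \in C_0^\infty(\mb{R}^2)$ one has the genuine integration-by-parts identities $\int_{\mb{R}^2} (\partial_{x_1}\tilde u_n)\varphi\,dx = -\int_{\mb{R}^2} \tilde u_n\,\partial_{x_1}\varphi\,dx$ and $\int_{\mb{R}^2} (\sqrt{h(x_1)}\,\partial_{x_2}\tilde u_n)\varphi\,dx = -\int_{\mb{R}^2} \tilde u_n\,(\sqrt{h(x_1)}\,\partial_{x_2}\varphi)\,dx$ (here $\sqrt{h}$ is bounded on $[0,1]$ and on the support of $\varphi$ intersected with where it matters, so the products are integrable). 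Now pass to the limit: $\tilde u_n \to \tilde u$ in $L^2(\mb{R}^2)$, $\partial_{x_1}\tilde u_n \to v$ in $L^2(\mb{R}^2)$ — since $\partial_{x_1}u_n \to \partial_{x_1}u$ in $L^2(\Omega)$ — and $\sqrt{h(x_1)}\,\partial_{x_2}\tilde u_n \to w$ in $L^2(\mb{R}^2)$ similarly. Each side of each identity converges (the right-hand sides because $\partial_{x_1}\varphi$ and $\sqrt{h(x_1)}\,\partial_{x_2}\varphi$ are bounded and compactly supported, hence in $L^2$), yielding $\int v\varphi = -\int \tilde u\,\partial_{x_1}\varphi$ and $\int w\varphi = -\int \tilde u\,\sqrt{h(x_1)}\,\partial_{x_2}\varphi$ for all $\varphi \in C_0^\infty(\mb{R}^2)$. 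This says precisely $v = \partial_{x_1}\tilde u$ and $w = \sqrt{h(x_1)}\,\partial_{x_2}\tilde u$ in the distributional sense, and since both are in $L^2(\mb{R}^2)$ we conclude $\tilde u \in \mcH^1_h(\mb{R}^2)$.

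\medskip

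The main obstacle I anticipate is a mild technical one rather than a conceptual one: making sure the weighted integration-by-parts identity is legitimate across $\partial\Omega$. Because $h(0) = 0$ and $h$ is only $C^1$ on $(0,1]$, one must be slightly careful that $\sqrt{h(x_1)}\,\partial_{x_2}\varphi$ is a bona fide $L^\infty$ (indeed bounded, compactly supported) function on $\mb{R}^2$ — this is fine since $h$ is assumed bounded on $[0,1]$, so $\sqrt{h}$ extends boundedly and we may take it to vanish (or be anything bounded) outside $[0,1]$ in the $x_1$-variable; the value of $w$ off $\Omega$ is zero regardless. The other point worth a sentence is that zero-extension of a $C_0^\infty(\Omega)$ function is smooth on $\mb{R}^2$, so no boundary terms appear at the approximating level; all the subtlety is then absorbed into the $L^2$-limit, which is clean. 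Everything else is routine, and the argument in fact proves the isometry $\|\tilde u\|_{\mcH^1_h(\mb{R}^2)} = \|u\|_{\mcH^1_h(\Omega)}$.
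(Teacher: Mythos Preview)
Your proposal is correct and follows essentially the same approach as the paper: approximate $u$ by $u_n\in C_0^\infty(\Omega)$, use integration by parts on $\mb{R}^2$ for the zero-extended $\tilde u_n$ against test functions $\varphi\in C_0^\infty(\mb{R}^2)$, and pass to the limit in $L^2$ to identify the distributional derivatives of $\tilde u$ with the zero extensions of $\partial_{x_1}u$ and $\sqrt{h(x_1)}\,\partial_{x_2}u$. The paper's proof is in fact slightly terser but does exactly this, and also records the isometry $\|\tilde u\|_{\mcH_h^1(\mb{R}^2)}=\|u\|_{\mcH_h^1(\Omega)}$ that you note.
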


\begin{proof}
In fact, for any $u\in \mcH_0^1(\Omega)$, there is a sequence $\{u_n\}_{n\in\mb{N}}\subset C_0^\infty(\Omega)$ such that  
\begin{equation*}
	u_n\ra u  \ \text{in} \ \mcH_h^1(\Omega), n\to \infty. 
\end{equation*}
For any $\varphi\in C_0^\infty(\mathbb{R}^2)$,
\begin{equation*}
	\begin{split}
		\int_{\mb{R}^2} (\sqrt{h(x_1)}\pt_{x_2}\tilde u)\varphi dx&=-\int_{\mb{R}^2}\tilde u(\sqrt{h(x_1)}\pt_{x_2}\varphi)dx
		=-\int_\Omega u(\sqrt{h(x_1)}\pt_{x_2}\varphi)dx\\
		&=-\lim_{n\ra\infty}\int_\Omega u_n(\sqrt{h(x_1)}\pt_{x_2}\varphi)dx=\lim_{n\ra\infty}\int_\Omega (\sqrt{h(x_1)}\pt_{x_2} u_n)\varphi dx\\
		&=\int_\Omega (\sqrt{h(x_1)}\pt_{x_2}u)\varphi dx=\int_{\mb{R}^2}{\left[\sqrt{h(x_1)}\pt_{x_2}u\right]}^\sim\varphi dx,
	\end{split}
\end{equation*}
we have
\begin{equation*}
	\sqrt{h(x_1)}\pt_{x_2}\tilde u={\left[\sqrt{h(x_1)}\pt_{x_2} u\right]}^\sim, 	   
\end{equation*}
in the sense of distribution, 
\begin{equation*}
	{\left[\sqrt{h(x_1)}\pt_{x_2} u\right]}^\sim(x)=
	\begin{cases}
		\sqrt{h(x_1)}\pt_{x_2} u(x), & x\in \Omega,\\
		0, & x\in \mb{R}^2 \backslash \Omega. 
	\end{cases}
\end{equation*}
Similarly, 
\begin{equation*}
	\pt_{x_1}\tilde u=(\pt_{x_1}u)^\sim . 
\end{equation*}
This indicates that $\tilde u\in \mcH_h^1(\mb{R}^2)$ and $\|\tilde u\|_{\mcH_h^1(\mb{R}^2)}=\|u\|_{\mcH_h^1(\Omega)}$.		
\end{proof}

\section{Compact embedding theorem}

Although the compact embedding problem of degenerate elliptic operators has been discussed a lot, such as \cite{Monticelli2020An,2011ACompact}, it is still difficult to solve in specific problems. Next, we prove that the embedding $\mcH_{h,0}^1(\Omega)\hookrightarrow L^2(\Omega)$ is compact by utilizing different techniques.

We will also utilize the some definitions \cite{mamedov2021poincare,muckenhoupt1972weighted,NonlinearPotential}, let $v(x)>0$ a.e. and local integrable in $\mathbb{R}^2$ with respect to Lebesgue measure, we shall use the notation $v(E)=\int_{E} v(x)dx$ for a measurable set $E \subset\mathbb{R}^2$, and ordinary Lebesgue measure will be denoted by $|E|$. 
We say that the function $v(x)$ belongs to the {\it Muckenhoupt class} $A_\infty(=A_\infty(\mathbb{R}^2,d,dx))$ if there exist constants $C$ and $\sigma  >0$ such that 
\begin{equation}\label{Se3.2}
	\frac{v(E)}{v(B)}\le C \left(\frac{|E|}{|B|}\right)^{\sigma } 
\end{equation}
for any Euclidean balls $B$  and any Lebesgue measurable set (for simply, measurable set) $E\subset B$, where $d$ and $dx$  are the standard Euclidean metric and Lebesgue measure in $\mb{R}^2$ respectively. 

 On the basis of the results involved in  \cite{mamedov2009some,mamedov2021poincare,mamedov2018sawyer,mamedov2014weighted},  and in combination with the contents of this paper,  we develop the following Lemma \ref{Le.3.1}.

\begin{lemma}\label{Le.3.1}
Let  $1< p\leq q <\infty$, $\Omega=(0,1)\times(0,1)$ $\subset \mathbb{R}^2$. Suppose that $v\in A_\infty$,  $w_j(x)>0$ a.e. and $w_j(x) \in L^1(\Omega)$.  For any  ball $B(x,r)$ having a center $x\in\Omega$, $0<r\leq d(\Omega)$,   if there is a constant  $A_{pq}$ such that 
\begin{equation}\label{Le.3.1.e1}
|B|^{-1}{d(B)}v(B\cap\Omega)^\frac{1}{q}{\left[{\omega_j}^{-\frac{1}{p-1}}(B\cap\Omega)\right]}^\frac{p-1}{p}\leq A_{pq},\quad j=1,2,
\end{equation} 
 where $d(\Omega)$ is the diameter of $\Omega$, then there exists a positive number $C_0(q, C, \sigma )$ such that for any $u\in{{\rm Lip}_0(\Omega)}$,
\begin{equation}
	{\left(\int_{\Omega}{\left\vert u(x)\right\vert}^qv(x)dx\right)}^\frac{1}{q}\leq C_0A_{pq}\sum_{j=1}^2\left(\int_{\Omega}{\left\vert \partial_{x_j}u\right\vert}^p\omega_j(x)dx\right)^\frac{1}{p},
\end{equation}
where ${\rm Lip}_0(\Omega)$ is the class of Lipschitz continuous functions which have compact support in $\Omega$, $C$ and $\sigma$ as shown in \eqref{Se3.2}.
\end{lemma}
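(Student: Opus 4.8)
The plan is to establish the weighted Sobolev (Poincar\'e-type) inequality of Lemma~\ref{Le.3.1} by a Riesz-potential/fractional-integral argument adapted to the degenerate weights. First I would recall the pointwise representation: for $u\in{\rm Lip}_0(\Omega)$, extending $u$ by zero to $\mathbb{R}^2$ and writing $u(x)$ as an integral of its gradient along rays from $x$, one obtains the standard bound
\begin{equation*}
	|u(x)| \le C \int_{\Omega} \frac{|\nabla u(y)|}{|x-y|} \, dy,
\end{equation*}
and more precisely, separating the two partial derivatives, $|u(x)|\le C\sum_{j=1}^2 I_1(|\partial_{x_j}u|)(x)$ where $I_1$ is the Riesz potential of order $1$ in $\mathbb{R}^2$. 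The task then reduces to proving a two-weight estimate for $I_1$: namely that $I_1$ maps $L^p(\omega_j)$ into $L^q(v)$ with operator norm controlled by $C_0 A_{pq}$, under the testing-type condition \eqref{Le.3.1.e1}.

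The key steps, in order, are as follows. (i) Reduce, via the pointwise inequality above, the claimed inequality to a two-weight norm inequality for the fractional integral operator $I_1$ (equivalently, a dyadic/local fractional maximal operator after discretization). (ii) Use the $A_\infty$ hypothesis on $v$ together with \eqref{Se3.2} to run a good-$\lambda$ or Calder\'on--Zygmund decomposition argument so that the measure $v\,dx$ behaves well under the Euclidean ball structure; this is what lets one pass from a condition tested only on balls $B(x,r)$ centered in $\Omega$ to a global estimate. (iii) Perform the Sawyer-type testing argument: discretize $I_1$ into a sum over dyadic generations, group the level sets, and on each piece estimate $v(B\cap\Omega)^{1/q}$ against $[\omega_j^{-1/(p-1)}(B\cap\Omega)]^{(p-1)/p}$ using precisely \eqref{Le.3.1.e1} with the geometric factor $|B|^{-1}d(B)$ absorbing the order-$1$ homogeneity of the kernel $|x-y|^{-1}$ in $\mathbb{R}^2$. (iv) Sum the resulting geometric series (here $p\le q$ is used to guarantee summability of the exponents) and collect constants into $C_0(q,C,\sigma)$ and $A_{pq}$. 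The constant's dependence only on $q,C,\sigma$ (and not on $p$ or the weights) comes from tracking that $p$ enters only through the already-normalized testing quantity.

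The main obstacle I expect is step (ii)--(iii): making the two-weight fractional-integral estimate work with the testing condition restricted to balls \emph{centered in $\Omega$} and intersected with $\Omega$, rather than all balls in $\mathbb{R}^2$. The $A_\infty$ condition on $v$ is doing real work here --- it provides the reverse-doubling/comparability needed to replace an arbitrary ball by one centered in $\Omega$ of comparable radius without losing more than a constant, and to handle the boundary geometry of the square $\Omega$ (which is Lipschitz, hence a John domain, so chains of balls staying inside $\Omega$ can be constructed). A secondary technical point is justifying the pointwise gradient representation up to the boundary for $u\in{\rm Lip}_0(\Omega)$, but since such $u$ have compact support in $\Omega$ this is immediate after zero-extension. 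Once the two-weight $I_1$ estimate is in hand, the proof of Lemma~\ref{Le.3.1} is a direct application; I would cite \cite{mamedov2009some,mamedov2021poincare,mamedov2018sawyer,mamedov2014weighted} for the fractional-integral machinery and carry out the adaptation to the present setting explicitly.
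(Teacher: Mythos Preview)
Your approach via the Riesz potential and two-weight fractional integral theory is genuinely different from the paper's proof. The paper does \emph{not} use the pointwise representation $|u|\le C\, I_1(|\nabla u|)$ or any global two-weight operator theory. Instead it runs a direct level-set argument: on each superlevel set $\Omega_{2\alpha}=\{u>2\alpha\}$ it constructs, via an intermediate-value argument, a ball $B(x,r(x))$ at each $x\in\Omega_{2\alpha}$ with a prescribed fraction $\gamma$ of its mass outside $\Omega_\alpha$; it then splits into two cases according to whether $|\Omega_{2\alpha}\cap B|$ is small or large relative to $|B|$. In the first case the $A_\infty$ property of $v$ gives $v(\Omega_{2\alpha}\cap B)\le c\,v(\Omega_\alpha\cap B)$ directly; in the second, a line-segment integration between points of $B\setminus\Omega_\alpha$ and $B\cap\Omega_{2\alpha}$ (the fundamental theorem of calculus, not a Riesz kernel) produces a gain of $\alpha^{-1}$ and a local integral of $|\partial_{x_j}u|$ over $B\cap(\Omega_\alpha\setminus\Omega_{2\alpha})$, which after H\"older's inequality is controlled using exactly \eqref{Le.3.1.e1}. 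A Besicovitch covering of $\Omega_{2\alpha}$, summation over the cover, and an integration in $\alpha$ (with Minkowski's inequality) close the argument, yielding an explicit constant $C_0(q,C,\sigma)$.

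Your route is plausible, but one point needs care: condition \eqref{Le.3.1.e1} is the simple ``$A_{p,q}$-type'' box condition, \emph{not} the Sawyer testing condition for $I_1$. The latter would require bounding $\int_B I_1(\sigma\chi_B)^q v\,dx$, which \eqref{Le.3.1.e1} alone does not give; you are implicitly relying on $v\in A_\infty$ to bridge this gap (e.g.\ via a Muckenhoupt--Wheeden-type reduction of $I_1$ to the fractional maximal operator, for which a box condition can suffice). This is in the spirit of the Mamedov references you cite, but it is the crux of your argument and should be made explicit rather than folded into ``discretize and sum''. By contrast, the paper's level-set method uses $A_\infty$ only in the elementary comparison $v(E)/v(B)\le C(|E|/|B|)^\sigma$ at one local step and needs no operator-theoretic machinery; it also makes the $p$-independence of $C_0$ transparent, which in the fractional-integral approach would require additional tracking.
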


\begin{proof}
	We carry out this proof by several steps.
	
	{\it Step 1}. 
	Let $u(x)\in {\rm Lip}_0(\Omega)$. We put $\Omega^+=\{x\in \Omega\mid u(x)>0\}$ and $\Omega^-=\{x\in \Omega\mid u(x)<0\}$. Let $\Omega^i$ be a connected component of $\Omega^+\ (i=1,2,\dots)$. For $\alpha>0$, we denote $\Omega_\alpha=\{x\in \Omega^i\mid u(x)>\alpha\}$. Since $u(x)$ is continuous, the set $\Omega_\alpha$ is open. Let $\alpha$ be such that  $|\Omega_{2\alpha}|>0$. Then for any fixed point $x\in \Omega_{2\alpha}$, there exists a ball $B=B(x,r(x))$ such that
	\begin{equation}\label{X4}
		|B(x,r(x)) - \Omega_\alpha|=\gamma|B(x,r(x))|,
	\end{equation}  
	where $\gamma\in(0,1)$ is some number independent of $\alpha,x $ and $r(x)$; this $\gamma$ will be specified later. 
	
	Indeed, to prove it, consider the function
	\begin{equation*}
		F(t)=|B(x,t)-\Omega_\alpha|-\gamma|B(x,t)|,
	\end{equation*}
note that for each $t_1,t_2 \in \mathbb{R}$, we assume $t_1\leq t_2$, then  
	\begin{equation*}	
		\begin{split}	
			|F(t_2)-F(t_1)|
			&=\bigg| |B(x,t_2)- \Omega_\alpha|- |B(x,t_1)- \Omega_\alpha| + \gamma (|B(x,t_1)|-|B(x,t_2)|)  \bigg|\\		
			& \le \bigg| |B(x,t_2) - \Omega_\alpha|- |B(x,t_1) - \Omega_\alpha|\bigg|+ \bigg| \gamma (|B(x,t_1)|-|B(x,t_2)|)  \bigg|\\		
			&\le \bigg| |B(x,t_2)|- |B(x,t_1)|\bigg|+ \bigg| \gamma (|B(x,t_1)|-|B(x,t_2)|)  \bigg|	\\		
			&\le (1+\gamma)\pi|t_2^2-t_1^2|.	
		\end{split}	
	\end{equation*}
Therefore, $F(t)$ is a continuous function with repect to $t$.      On one hand,   $F(t)$ is negative for sufficiently small $t>0$ since $x$ is an interior point of $\Omega_{2\alpha}$. On the other hand, due to the boundedness of $\Omega$, there exists $t\in(1/\sqrt{\pi},+\infty)$  such that
	$$|B(x,t)-\Omega_\alpha|>|B(x,t)-\Omega|\geq \gamma|B(x,t)|,$$ 
which implies that  the function $F(t)$ be positive for sufficiently large values of $t$.  Choose $r(x)\in\mb{R}$ such that $F(r(x))=0$. This proves \eqref{X4}.

	{\it Step 2}. For a fixed point $x\in \Omega_{2\alpha}$, we denote $B=B(x,r(x))$ for simplicity. There are two possibilities:
	
	$(1)$ If
	\begin{equation}\label{X6}
		|\Omega_{2\alpha}\cap B|<\gamma|B|,
	\end{equation}
	using the assumption  $v(x)\in A_\infty$, we obtain
	\begin{equation}\label{X7}
		v(\Omega_{2\alpha}\cap B)\leq C\gamma^\sigma  v(B).
	\end{equation} 
	Using construction \eqref{X4} and assumption $v(x)\in A_\infty$, it follows 
	\begin{equation*}
		v(B)=v(B\cap \Omega_\alpha)+v(B-\Omega_\alpha)\leq v(B\cap \Om_\alpha)+C\left(\frac{|B-\Om_\alpha|}{|B|}\right)^\sigma  v(B)\leq v(B\cap \Omega_\alpha)+C\gamma^\sigma  v(B).	
	\end{equation*}
	Choosing $\gamma$ such that $C\gamma^\sigma <1$, we have 
	\begin{equation*}
		v(B)\leq \frac{1}{1-C\gamma^\sigma }v(B\cap \Omega_\alpha)	
	\end{equation*}
	and from \eqref{X7}, it follows that 
	\begin{equation}\label{X8}
		v(B\cap \Omega_{2\alpha})\leq \frac{C\gamma^\sigma }{1-C\gamma^\sigma }v(B\cap \Omega_\alpha).
	\end{equation}
	
	$(2)$ If
	\begin{equation}\label{X9}
		|\Omega_{2\alpha}\cap B|\ge \gamma|B|,
	\end{equation} 
	then from \eqref{X4} and \eqref{X9}, we have 
	\begin{equation*}
		\int_{A}\left(\int_{Z}dy\right)dx\ge \gamma^2|B|^2,
	\end{equation*}
	where $A=B-\Omega_\alpha$, $Z=B\cap\Omega_{2\alpha}$.
	
	Let points $x\in A$, $y\in Z$ be arbitrarily fixed. The line segment $\{x+t(y-x)\mid 0<t<1\}$ connecting $x$, $y$ lies in $B$ necessarily intersects the surfaces $\{x\in \Omega^i\mid u(x)=\alpha\}$ and $\{x\in \Omega^i\mid u(x)=2\alpha\}$ at some points $x^\prime=x+t_1(y-x), \hat x'=x+t_1'(y-x)$ and $x^{\prime\prime}=x+t_2(y-x)$, where $t_1,t_1'$ be the smallest and largest value of $t$ when the line segment intersects the surface $\partial \Omega_\alpha$, repsectively, and $t_2$ denote the first time when the line segment $\{x+t(y-x)\mid 0<t<1\}$ meets the surface $\partial \Omega_{2\alpha}$, i.e., $0<t_1\leq t_1'<t_2<1$ are numbers depending on $x$ and $y$. Then $u(x^\prime)=u(\hat x')=\alpha$ and $u(x^{\prime\prime})=2\alpha$.
	
	(i) It is clearly that 
	\begin{equation*}
		\gamma^2|B|^2\leq \frac{1}{\alpha}\int_{A}\left(\int_{Z}|u(\hat x^\prime)-u\left(x^{\prime\prime}\right)|dy\right)dx,
	\end{equation*}
	whence
	\begin{equation*}
		\begin{split}
			\gamma^2|B|^2
			&\leq \frac{1}{\alpha}\int_{A}\left(\int_{Z}\left(\int_{t_1'(x,y)}^{t_2(x,y)}\big|u_t(x+t(y-x))\big|dt\right)dy\right)dx.
		\end{split}
	\end{equation*} 
	According to Fubini's theorem,
	\begin{equation*}
		\gamma^2|B|^2\leq \sum_{j=1}^{2}\frac{d(B)}{\alpha}\int_{A}\left(\int_{t_1'(x,y)}^{t_2(x,y)}\left(\int_{\{y\in Z\mid x+t(y-x)\in G\}}
		\big|\partial_{x_j}u(x+t(y-x))\big|dy\right)dt\right)dx,
	\end{equation*}
	where $G=B\cap \left(\Om_\alpha-\Om_{2\alpha}\right)$. 
	
	(ii) For each $a,b\in [t_1,t_1'] \ (a<b)$ with $u(x_t)>\alpha, \forall t\in (a,b)$, where $u(x_a)=u(x_b)=\alpha$ and $x_t=x+t(y-x)$. It is also clearly that 
	\begin{equation*}
		0\leq  \sum_{j=1}^{2}\frac{d(B)}{\alpha}\int_{A}\left(\int_a^b\left(\int_{\{y\in Z\mid x+t(y-x)\in G\}}\big|\partial_{x_j}u(x+t(y-x))\big|dy\right)dt\right)dx. 
	\end{equation*}
	
	According to (i) and (ii), we obtain that 
	\begin{equation*}
		\gamma^2|B|^2\leq \sum_{j=1}^{2}\frac{d(B)}{\alpha}\int_{A}\left(\int_0^1\left(\int_{\{y\in Z\mid x+t(y-x)\in G\}}\big|\partial_{x_j}u(x+t(y-x))\big|dy\right)dt\right)dx. 
	\end{equation*}
	
	Insert a change of variable in the interior integral $z=x+t(y-x)$ passing from $y$ to $z$. Since $z\in G$, and $dy=t^{-2}dz$ we obtain
	\begin{equation*}
		\gamma^2|B|^2\leq \sum_{j=1}^{2}\frac{d(B)}{\alpha}\int_{A}\left(\int_0^1\left(\int_{\{z\in G\mid t^{-1}(z-x)+x\in Z\}}
		\big|\partial_{z_j}u(z)\big|dz\right)\frac{dt}{t^2}\right)dx.
	\end{equation*}
	When $t\in (0,1)$, we have $z\in B$. Then $|x_j-z_j|<td(B)$, applying Fubini's formula once again, we get
	\begin{equation*}
		\begin{split}
			\gamma^2|B|^2&\leq \sum_{j=1}^{2}\frac{d(B)}{\alpha}\int_0^1\left(\int_G\big|\partial_{z_j}u(z)\big|\left(\int_{\{x\mid |x_j-z_j|<td(B)\}}dx\right)dz\right)\frac{dt}{t^2}\\
			&\leq \sum_{j=1}^{2}\frac{2^2d(B)|B|}{\alpha}\int_G\big|\partial_{z_j}u(z)\big|dz,
		\end{split}
	\end{equation*}
	Therefore, by the inequality $(a+b)^q\leq 2^{q-1}(a^q+b^q)$, we get
	\begin{equation}\label{X10}
		1\leq \sum_{j=1}^{2}\left(\frac{2^{\frac{3q-1}{q}}d(B)}{\alpha\gamma^2|B|}\int_G\big|\partial_{z_j}u(z)\big|dz\right)^q.
	\end{equation}
	By the H$\ddot{{\rm o}}$lder inequality, it follows
	\begin{equation}\label{X11}
		\begin{split}
			1\leq \sum_{j=1}^{2}&\left(\frac{2^{\frac{3q-1}{q}}d(B)}{\alpha\gamma^2|B|}\right)^q\left(\int_{B\cap \Omega}\omega_j^{-\frac{1}{p-1}}(z)dz \right)^{\frac{q(p-1)}{p}}\left(\int_G{\big|\partial_{z_j}u\big|}^p\omega_j(z)dz\right)^{\frac{q}{p}}.
		\end{split}
	\end{equation}
	Combining the estimates \eqref{Le.3.1.e1} and \eqref{X11}, we get
	\begin{equation}\label{X12}
		1\leq \frac{2^{3q-1}A_{pq}^q}{\gamma^{2q}\alpha^q}\frac{1}{v(B\cap \Omega)}\sum_{j=1}^{2}\left(\int_G{\big|\partial_{z_j}u\big|}^p\omega_j(z)dz\right)^{\frac{q}{p}}. 
	\end{equation}
	Consequently,
	\begin{equation}\label{X13}
		v(B\cap \Omega_{2\alpha})\leq v(B\cap \Omega)\leq \frac{2^{3q-1}A_{pq}^q}{\gamma^{2q}\alpha^q}\sum_{j=1}^{2}\left(\int_G{\big|\partial_{z_j}u\big|}^p\omega_j(z)dz\right)^{\frac{q}{p}}.   
	\end{equation}

	By the estimates \eqref{X8} and \eqref{X13}, we obtain
	\begin{equation}\label{X14}
		\begin{aligned}
			v(B\cap \Omega_{2\alpha})&\leq \frac{C\gamma^\sigma }{1-C\gamma^\sigma }v(B\cap \Omega_\alpha)+\frac{2^{3q-1}A_{pq}^q}{\gamma^{2q}\alpha^q}\sum_{j=1}^{2}\left(\int_{B\cap (\Omega_\alpha-\Omega_{2\alpha})}{\big|\partial_{z_j}u\big|}^p\omega_j(z)dz\right)^{\frac{q}{p}}.
		\end{aligned}
	\end{equation}

	{\it Setp 3}.	It is evident that the system of balls $\{B=B(x,r(x))\mid x\in \Omega_{2\alpha}\}$ covers $\Omega_{2\alpha}$. Furthermore, by the constructions above, we have $\mathop{{\rm sup}}\limits_{x\in \Omega_{2\alpha}}r(x)<\infty$. Due to the Besicovitch's covering theorem \cite{evans2018measure,duoandikoetxea2001fourier}, one can select a finite or countable subcover $\{B_k\}_{k=1}^\infty$ that covers $\Omega_{2\alpha}$ from $\{B(x,r(x))\mid x\in \Omega_{2\alpha}\}$ with finite multiplicity:
	\begin{equation}\label{X15}
		\sum_{k=1}^\infty \chi_{B_k}(x)\leq K,
	\end{equation}
	Writing \eqref{X14} for the system of balls $B_k$, it follows 
	\begin{equation}\label{X16}
		\begin{aligned}
			v(B_k\cap \Omega_{2\alpha})&\leq \frac{C\gamma^\sigma }{1-C\gamma^\sigma }v(B_k\cap \Omega_\alpha)+ \frac{2^{3q-1}A_{pq}^q}{\gamma^{2q}\alpha^q}\sum_{j=1}^{2}\left(\int_{B_k\cap(\Om_\alpha-\Om_{2\alpha})}{\big|\partial_{z_j}u\big|}^p\omega_j(z)dz\right)^{\frac{q}{p}}.
		\end{aligned}
	\end{equation}
	Summing \eqref{X16} over $k$ and taking into account \eqref{X15}, and using elementary inequality for positive numbers $a_1^{q/p}+a_2^{q/p}+\dots\leq (a_1+a_2+\cdots)^{q/p}$, we arrive at 
	\begin{equation}\label{X17}
		v(\Omega_{2\alpha})\leq \frac{KC\gamma^\sigma }{1-C\gamma^\sigma }v(\Omega_\alpha)+ \frac{K2^{3q-1}A_{pq}^q}{\gamma^{2q}\alpha^q}\sum_{j=1}^{2}\left(\int_{ \Om_\alpha-\Om_{2\alpha}}{\big|\partial_{z_j}u\big|}^p\omega_j(z)dz\right)^{\frac{q}{p}}.
	\end{equation}
	We integrate \eqref{X17} over $(0,\infty)$:
	\begin{equation}\label{X18}
		\begin{aligned}
			\int_0^\infty v(\Omega_{2\alpha})d\alpha^q &\leq \frac{KC\gamma^\sigma }{1-C\gamma^\sigma }\int_0^\infty v(\Omega_\alpha)d\alpha^q+ \sum_{j=1}^{2}\frac{K2^{3q-1}qA_{pq}^q}{\gamma^{2q}}\int_0^\infty \frac{d\alpha}{\alpha}\left(\int_{ \Omega_\alpha -\Omega_{2\alpha}}{\big|\partial_{z_j}u\big|}^p\omega_j(z)dz\right)^{\frac{q}{p}}.
		\end{aligned}
	\end{equation}
	Since 
	\begin{equation*}
		\begin{split}
			\int_0^\infty v(\Omega_{2\alpha})d\alpha^q=\frac{1}{2^q}\int_{\Omega^i}u^q(x)v(x)dx,\quad
			\int_0^\infty v(\Omega_{\alpha})d\alpha^q=\int_{\Omega^i}u^q(x)v(x)dx. 
		\end{split}
	\end{equation*}
	We can apply the Minkowski's inequality in \eqref{X18} to obtain that 
	
	\begin{equation}\label{X19}
		\begin{aligned}
			\bigg(&\frac{1}{2^q}-\frac{KC\gamma^\sigma }{1-C\gamma^\sigma }\bigg)\int_{\Omega^i}u^q(x)v(x)dx\leq \frac{K2^{3q-1}qA_{pq}^q}{\gamma^{2q}}\sum_{j=1}^{2}\left(\int_{\Omega^i }{\big|\partial_{z_j}u\big|}^p\omega_j(z)\left(\int_{u(z)/2}^{u(z)}\frac{d\alpha}{\alpha}\right)^{\frac{p}{q}}dz\right)^{\frac{q}{p}}.
		\end{aligned}
	\end{equation}
	Choosing $\gamma$ so small that
	\begin{equation}\label{X20}
		\frac{1}{2^q}-\frac{KC\gamma^\sigma }{1-C\gamma^\sigma }>0, 
	\end{equation}
	Therefore, \eqref{X19} implies that 
	\begin{equation}\label{X21}
		\begin{split}
			\int_{\Omega^i}u^q(x)v(x)dx
			\leq &\left(\frac{1}{2^q}-\frac{KC\gamma^\sigma }{1-C\gamma^\sigma }\right)^{-1}\frac{K2^{3q-1}q{\rm ln2}}{\gamma^{2q}}A_{pq}^q\sum_{j=1}^{2}\left(\int_{\Omega^i }{\big|\partial_{z_j}u\big|}^p\omega_j(z)dz\right)^{\frac{q}{p}}.
		\end{split}
	\end{equation}
	Summing the inequalities \eqref{X21} for all $\Omega^i$ and using elementary inequality for positive numbers $a_1^{q/p}+a_2^{q/p}+\dots\leq (a_1+a_2+\cdots)^{q/p}$, it follows
	\begin{equation}\label{X22}
		\begin{split}
			\int_{\Omega^+}u^q(x)v(x)dx
			&\leq C_0^qA_{pq}^q\sum_{j=1}^{2}\left(\int_{\Omega^+ }{\big|\partial_{z_j}u\big|}^p\omega_j(z)dz\right)^{\frac{q}{p}},
		\end{split}
	\end{equation}
	with
	\begin{equation*}
		C_0^q= \left(\frac{1}{2^q}-\frac{KC\gamma^\sigma }{1-C\gamma^\sigma }\right)^{-1}\frac{K2^{3q-1}q}{\gamma^{2q}}{\rm ln2}.
	\end{equation*} 
	Similarly, we can prove the inequality is true in $\Omega^-$ for the function $-u(x)$:
	\begin{equation}\label{X23}
		\int_{\Omega^-}(-u(x))^qv(x)dx
		\leq C_0^qA_{pq}^q\sum_{j=1}^{2}\left(\int_{\Omega^- }{\big|\partial_{z_j}u\big|}^p\omega_j(z)dz\right)^{\frac{q}{p}}.\\
	\end{equation} 
	
	Combining estimates \eqref{X22} and \eqref{X23}, we obtain the inequality:
	\begin{equation}
		{\left(\int_{\Omega}{\left\vert u(x)\right\vert}^qv(x)dx\right)}^{\frac{1}{q}}\leq C_0A_{pq}\sum_{j=1}^2\left(\int_{\Omega}{\left\vert \partial_{x_j} u\right\vert}^p\omega_j(x)dx\right)^{\frac{1}{p}},  
	\end{equation}
 so far, the proof is completed.			
\end{proof}

\begin{lemma}\label{Le.3.2}
	For any $u\in \mcH_{h,0}^1(\Omega)$ and  $2\le q \le \frac{2}{1-\theta}$,   we have 
	\begin{equation}\label{Se.3.7}
	\|u\|_{L^q(\Omega)}\le C\|u\|_{\mcH_{h,0}^1(\Omega)}.	
	\end{equation}
\end{lemma}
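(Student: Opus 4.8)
The plan is to deduce \eqref{Se.3.7} from the weighted Sobolev inequality of Lemma \ref{Le.3.1}, applied with $p=2$, the Lebesgue weight $v\equiv 1$ (which belongs to $A_\infty$ with $C=\sigma=1$, since $|E|/|B|\le 1$ forces $|E|/|B|\le(|E|/|B|)^{\sigma}$ for any $\sigma\le 1$), and the two weights $\omega_1\equiv 1$ and $\omega_2(x)=h(x_1)$. Both $\omega_j$ are positive a.e.\ on $\Omega$ and lie in $L^1(\Omega)$ because $h$ is bounded and $\Omega$ is bounded. With these choices the conclusion of Lemma \ref{Le.3.1} reads, for $u\in\mathrm{Lip}_0(\Omega)$,
\[
\|u\|_{L^q(\Omega)}\le C_0A_{2q}\left[\left(\int_\Omega|\partial_{x_1}u|^2\,dx\right)^{1/2}+\left(\int_\Omega h(x_1)|\partial_{x_2}u|^2\,dx\right)^{1/2}\right]\le C\|u\|_{\mcH_h^1(\Omega)},
\]
so the whole matter reduces to checking the hypothesis \eqref{Le.3.1.e1} for $j=1,2$ and for all balls $B=B(x,r)$ with $x\in\Omega$ and $0<r\le d$.

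For $j=1$ one has $\omega_1^{-1/(p-1)}(B\cap\Omega)=|B\cap\Omega|$, so the left-hand side of \eqref{Le.3.1.e1} equals $|B|^{-1}d(B)\,|B\cap\Omega|^{1/q+1/2}$; using $|B\cap\Omega|\le|B|=\pi r^2$, $d(B)=2r$ and the positivity of the exponent $1/q+1/2$, this is bounded by $c\,r^{2/q}$, which stays bounded for $0<r\le d$. For $j=2$ the decisive tool is the structural hypothesis \eqref{e2}, which yields $\int_{B\cap\Omega}h(x_1)^{-1}\,dx\le C|B\cap\Omega|^{\theta}$; hence the left-hand side of \eqref{Le.3.1.e1} is at most $C^{1/2}|B|^{-1}d(B)\,|B\cap\Omega|^{1/q+\theta/2}\le c\,r^{\,2/q+\theta-1}$, again after estimating $|B\cap\Omega|\le|B|$ and writing $d(B)=2r$, $|B|=\pi r^2$. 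This last quantity is bounded uniformly in $0<r\le d$ exactly when $2/q+\theta-1\ge 0$, i.e.\ when $q\le\frac{2}{1-\theta}$; this is precisely the upper constraint on $q$ in the statement, while the lower constraint $q\ge 2$ is what makes $p=2\le q$, as Lemma \ref{Le.3.1} requires. Therefore $A_{2q}<\infty$ and the displayed inequality holds for every $u\in\mathrm{Lip}_0(\Omega)$.

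It remains to pass from $\mathrm{Lip}_0(\Omega)$ to $\mcH_{h,0}^1(\Omega)$ by density. Given $u\in\mcH_{h,0}^1(\Omega)$, choose $u_n\in C_0^\infty(\Omega)\subset\mathrm{Lip}_0(\Omega)$ with $\|u_n-u\|_{\mcH_h^1(\Omega)}\to 0$; applying the inequality to $u_n-u_m$ shows $\{u_n\}$ is Cauchy in $L^q(\Omega)$, hence converges there, and the limit must be $u$ since $L^q(\Omega)\hookrightarrow L^2(\Omega)$ on the finite-measure set $\Omega$ and $u_n\to u$ in $L^2(\Omega)$. Letting $n\to\infty$ in $\|u_n\|_{L^q(\Omega)}\le C\|u_n\|_{\mcH_h^1(\Omega)}$ gives \eqref{Se.3.7}, noting $\|\cdot\|_{\mcH_h^1(\Omega)}=\|\cdot\|_{\mcH_{h,0}^1(\Omega)}$ on this subspace. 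The only genuinely delicate step is the verification of \eqref{Le.3.1.e1} for $j=2$, where one sees that the admissible range of $q$ is governed by the exponent $\theta$ appearing in \eqref{e2}; everything else is routine.
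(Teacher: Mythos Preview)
Your proof is correct and follows essentially the same approach as the paper: apply Lemma~\ref{Le.3.1} with $p=2$, $v\equiv 1$, $\omega_1\equiv 1$, $\omega_2=h(x_1)$, verify \eqref{Le.3.1.e1} via the structural assumption \eqref{e2} (which produces exactly the exponent $2/q+\theta-1$ and hence the range $q\le 2/(1-\theta)$), and then pass to $\mcH_{h,0}^1(\Omega)$ by density. Your density step is in fact slightly leaner than the paper's, which first proves $\mcH_{h,0}^1(\Omega)=\overline{\mathrm{Lip}_0(\Omega)}^{\mcH_h^1(\Omega)}$; you only need the trivial inclusion $C_0^\infty(\Omega)\subset\mathrm{Lip}_0(\Omega)$, and that suffices.
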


\begin{proof}
We see that $\mcH_{h,0}^1(\Omega)=\overline{{\rm Lip}_0(\Omega)}^{\mcH_h^1(\Omega)}$. Indeed, we shall only to prove that $\overline{{\rm Lip}_0(\Omega)}^{\mcH_h^1(\Omega)} \subset \overline{C_0^\infty(\Omega)}^{\mcH_h^1(\Omega)}$ since $\overline{C_0^\infty(\Omega)}^{\mcH_h^1(\Omega)} \subset \overline{{\rm Lip}_0(\Omega)}^{\mcH_h^1(\Omega)} $ is obvious. For any $u\in\overline{{\rm Lip}_0(\Omega)}^{\mcH_h^1(\Omega)}$,  there exists a sequence $\{u_n\}_{n\in\mathbb{N}}\subset{\rm Lip}_0(\Omega)$ such that $\|u_n-u\|_{\mcH_h^1(\Omega)}\rightarrow 0$ as $n \rightarrow \infty$ and $|Du_n|\leq C \mbox{ a.e. on } \Omega$, which shows that  $u_n\in H_0^1(\Omega)$ with compact support on $\Omega$, then there exists $\{v_n\}_{n\in\mathbb{N}}\subset C_0^\infty(\Omega)$ such that 
$\|v_n-u_n\|_{H_0^1(\Omega)}\rightarrow 0, \ n\rightarrow \infty$,  
and we have 
\begin{equation*}
	\|v_n-u_n\|_{\mcH_h^1(\Omega)}\leq C\|v_n-u_n\|_{H^1(\Omega)}\rightarrow 0, \ n\rightarrow \infty
\end{equation*}
in view of the definition of $h(x_1)$, in addition
\begin{equation*}
	\|v_n-u\|_{\mcH_h^1(\Omega)}\leq \|v_n-u_n\|_{\mcH_h^1(\Omega)}+\|u_n-u\|_{\mcH_h^1(\Omega)}
	\rightarrow 0, \ n\rightarrow \infty,
\end{equation*}
so that $\mcH_{h,0}^1(\Omega)=\overline{{\rm Lip}_0(\Omega)}^{\mcH_h^1(\Omega)}$, ${\rm Lip}_0(\Omega)$ is dense in space $\mcH_{h,0}^1(\Omega)$. 

It is not hard to verify that $v(x)=1$ meets the conditions  of $ A_\infty(\Omega)$.  Consider $w_1=1$ and $w_2=h(x_1)$, and $p=2$ in \eqref{Le.3.1.e1}, therefore for $p=2$, $2\le q \le \frac{2}{1-\theta}$, we immediately receive that  
\begin{equation*}
\begin{split}
|B|^{-1} d(B)  v(B\cap\Omega)^{\frac{1}{q}} &\left({\int_{B\cap \Omega} w_1(x)^{-1}dx}\right)^{\frac{1}{2}} \leq Cr^\frac{2}{q}	\le C,\\
|B|^{-1} d(B)  v(B\cap\Omega)^{\frac{1}{q}} &\left({\int_{B\cap \Omega} w_2(x)^{-1}dx}\right)^{\frac{1}{2}} \le Cr^{\theta-1+\frac{2}{q}} \le C,
\end{split}	
\end{equation*}
 furthermore, by Lemma \ref{Le.3.1}  for any $u \in \mcH_{h,0}^1(\Omega)$
\begin{equation*}
\left(\int_{\Omega} |u|^qdx\right)^{\frac{1}{q}}\le C \left[\left(\int_{\Omega}|\partial_{x_1}u|^2dx\right)^{\frac{1}{2}}+\left(\int_{\Omega}h(x_1)|\partial_{x_2}u|^2 dx\right)^{\frac{1}{2}}\right]. 	
\end{equation*}
By Jensen inequality $\frac{a_1^t+a_2^t}{2}\le  \left(\frac{a_1+a_2}{2} \right)^t$ for $t\in (0,1)$,  $a_1,a_2 \in \mathbb{R^+}$, we have 
\begin{equation*}
\left(\int_{\Omega}|\partial_{x_1}u|^2dx\right)^{\frac{1}{2}}+\left(\int_{\Omega}  h(x_1)|\partial_{x_2}u|^2 dx\right)^{\frac{1}{2}}	\le C\left(\int_{\Omega}|\partial_{x_1}u|^2+h(x_1)|\partial_{x_2}u|^2dx\right)^{\frac{1}{2}},
\end{equation*}
so that the desired result is proved.
\end{proof}

\begin{remark}\label{202306071004}
	In particular, Theorem {\rm \ref{Le.3.2}} implies {\rm Poincar\'e} inequality, i.e.
	\begin{equation}\label{202302092134}
		\|u\|_{L^2(\Omega)}\le C\left(\int_{\Omega}|\partial_{x_1}u|^2+h(x_1)|\partial_{x_2}u|^2 dx\right)^{\frac{1}{2}}, \  \forall u\in \mcH_{h,0}^1(\Omega).	
	\end{equation}
	Therefore,  the equivalent norm of $\mcH_{h,0}^1(\Omega)$ could be written as:
	\begin{equation*}
		\|u\|_{\mcH_{h,0}^1(\Omega)}=\left(\int_{\Omega}|\partial_{x_1}u|^2+h(x_1)|\partial_{x_2}u|^2 dx\right)^{\frac{1}{2}}.
	\end{equation*}	
\end{remark}

\begin{theorem}\label{Th.3.3}
The  embedding $\mcH_{h,0}^1(\Omega)\hookrightarrow L^2(\Omega)$ is compact.	
\end{theorem}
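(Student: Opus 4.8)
The plan is to combine the gain of integrability furnished by Lemma~\ref{Le.3.2} with the classical Rellich--Kondrachov theorem applied to the coarser space $W_0^{1,1}(\Omega)$, and then to interpolate in $L^p$. Concretely, let $\{u_n\}_{n\in\mb{N}}$ be a bounded sequence in $\mcH_{h,0}^1(\Omega)$; it suffices to extract a subsequence converging strongly in $L^2(\Omega)$.

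First I would invoke Lemma~\ref{202306061911} together with Lemma~\ref{202306061841} to see that $\{u_n\}$ is bounded in $W_0^{1,1}(\Omega)$, with $\|u_n\|_{W^{1,1}(\Omega)}\le C\|u_n\|_{\mcH_{h,0}^1(\Omega)}\le C$. Since $\Omega=(0,1)\times(0,1)$ is a bounded Lipschitz domain, the Rellich--Kondrachov theorem gives that the embedding $W^{1,1}(\Omega)\hookrightarrow L^1(\Omega)$ is compact; hence there is a subsequence, still denoted $\{u_n\}$, and a function $u\in L^1(\Omega)$ with $u_n\to u$ strongly in $L^1(\Omega)$, and (along a further subsequence) a.e. in $\Omega$.

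Next, fix $q=\tfrac{2}{1-\theta}$; since $0<\theta<1$ we have $q>2$ strictly, and Lemma~\ref{Le.3.2} yields $\|u_n\|_{L^q(\Omega)}\le C\|u_n\|_{\mcH_{h,0}^1(\Omega)}\le C$ for all $n$. By Fatou's lemma $u\in L^q(\Omega)$, so $\sup_n\|u_n-u\|_{L^q(\Omega)}<\infty$. The log-convexity (interpolation) inequality for Lebesgue norms then gives, with $\lambda=\tfrac{q}{2(q-1)}\in(0,1)$ determined by $\tfrac12=(1-\lambda)+\tfrac{\lambda}{q}$,
\begin{equation*}
	\|u_n-u\|_{L^2(\Omega)}\le \|u_n-u\|_{L^1(\Omega)}^{\,1-\lambda}\,\|u_n-u\|_{L^q(\Omega)}^{\,\lambda}\le C\,\|u_n-u\|_{L^1(\Omega)}^{\,1-\lambda}\longrightarrow 0 ,
\end{equation*}
so $u_n\to u$ strongly in $L^2(\Omega)$, which proves the compactness of $\mcH_{h,0}^1(\Omega)\hookrightarrow L^2(\Omega)$.

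The only genuinely delicate point is the use of the Rellich--Kondrachov theorem in the borderline exponent $W^{1,1}$: one either invokes that $\Omega$ is a bounded Lipschitz (hence Sobolev extension) domain, or simply extends functions of $W_0^{1,1}(\Omega)$ by zero to $W^{1,1}(\mb{R}^2)$ with support in $\overline\Omega$ and obtains $L^1$-compactness from the Fréchet--Kolmogorov criterion via mollification. Everything else is routine; note that the argument crucially uses the strict inequality $q>2$, which is precisely where the structural hypothesis \eqref{e2} on $h$ (with $\theta>0$) enters, since for $q=2$ the interpolation exponent degenerates to $\lambda=1$ and the estimate collapses.
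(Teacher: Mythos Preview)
Your proof is correct and follows the same overall architecture as the paper: establish $L^1$ precompactness of bounded sets in $\mcH_{h,0}^1(\Omega)$, combine this with the $L^q$ bound from Lemma~\ref{Le.3.2} for $q=\tfrac{2}{1-\theta}>2$, and interpolate to upgrade $L^1$ convergence to $L^2$ convergence. The one genuine difference lies in how the $L^1$ compactness is obtained. The paper verifies the Fr\'echet--Kolmogorov criterion by hand: it approximates by $C_0^\infty$ functions and proves uniform $L^1$ translation continuity and tightness near $\partial\Omega$ via explicit one-dimensional integration estimates involving $\|h^{-1}\|_{L^1(0,1)}$. You instead short-circuit this step by invoking Lemmas~\ref{202306061841}--\ref{202306061911} to embed a bounded set of $\mcH_{h,0}^1(\Omega)$ into a bounded set of $W_0^{1,1}(\Omega)$ and then applying the classical Rellich--Kondrachov theorem on the Lipschitz square. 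Your route is more economical and uses lemmas the paper has already proved; the paper's route is self-contained but longer. The interpolation endgame is identical in both arguments, and both rely in an essential way on the strict inequality $q>2$ (equivalently $\theta>0$).
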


\begin{proof}
Firstly, let $\{u_n\}_{n\in\mb{N}}\subset\mcH_{h,0}^1(\Omega)$ be a bounded sequence, we  assert that the  embedding $\mcH_{h,0}^1(\Omega)\hookrightarrow L^1(\Omega)$ is compact, so  $\{u_n\}_{n\in\mb{N}}$ is Cauchy in $L^1(\Omega)$. Set 
\begin{equation*}
	\tilde u_n(x)=
	\begin{cases}
		u_n(x), & x\in \Omega,\\
		0, & x\in \mb{R}^2\backslash\Omega,
	\end{cases}
\end{equation*}
then $\tilde u_n\in\mcH_{h,0}^1(\mb{R}^2)$ according to  Lemma \ref{L.2.4}. In order to show $\{\tilde u_n\}_{n\in\mb{N}}$ is paracompact in $L^1(\Omega)$, we shall show that {\cite[Theorem 2.32]{bookAdams}}: for every  $\epsilon>0$, there exists $\delta>0$ and a subset $G\subset\subset \Omega$, such that 
\begin{equation}\label{Se.3.8}
	\int_\Omega|\tilde u_n(x+\xi)-\tilde u_n(x)|dx<\epsilon,\quad \int_{\Omega-\ol G}|\tilde u_n(x)|dx<\epsilon
\end{equation}
for each  $n\in\mb{N}$ and $\xi=(\xi_1,\xi_2)\in\mb{R}^2$, $|\xi|<\delta$. Let $\epsilon\in (0,1)$ be arbitrary, for each  $n\in\mb{N}$, there exists $\{\hat u_n\}_{n\in\mb{N}}\subset C_0^\infty(\Omega)$, such that 
\begin{equation}\label{Se.3.9}
	\|u_n-\hat u_n\|_{\mcH_h^1(\Omega)}<\epsilon, 
\end{equation}
still denoted by $\widetilde{(\hat {u_n})}=\hat {u_n}$ for each $\hat {u_n}\in C_0^\infty(\Omega)$. Obviously, $\{\hat u_n\}_{n\in\mb{N}}\subset C_0^\infty(\Omega)$ is bounded in $\mcH_{h,0}^1(\mb{R}^2)$, note that
\begin{equation*}
	\begin{split}
		\hat u_n(x_1,{x_2})
		&=\int_0^{x_2}(\pt_{x_2}\hat u_n)(x_1,t)\df t=\int_0^{x_2} \sqrt{h^{-1}(x_1)}\left(\sqrt{h(x_1)}\pt_{x_2}\hat u_n\right)(x_1,t) dt,
	\end{split}
\end{equation*}
which imply
\begin{equation}\label{Se.3.10}
	\begin{split}
		\int_0^1|\hat u_n(s,{x_2})|\df s
		&\leq\int_0^1 \left|\int_0^{x_2} \sqrt{h^{-1}(s)} \sqrt{h(s)}\pt_{x_2}\hat u_n(s,t)\df t\right| ds\\
		&\leq \|h^{-1}\|_{L^1(0,1)}^\frac{1}{2}\|\sqrt{h(\cdot)}\pt_{x_2}\hat u_n\|_{L^2(\Omega)}x_2^\frac{1}{2}, 
	\end{split}
\end{equation} 
we deduce that 
\begin{equation*}
	\begin{split}
		\int_{(0,1)\times (0,j^{-1})}|\hat u_n(x)|dx
		&\leq \|h^{-1}\|_{L^1(0,1)}^\frac{1}{2}\|\sqrt{h(\cdot)}\pt_{x_2}\hat u_n\|_{L^2(\Omega)}\int_0^{j^{-1}} x_2^\frac{1}{2} d{x_2}\\
		&=\frac{2}{3}\|h^{-1}\|_{L^1(0,1)}^\frac{1}{2}\|\sqrt{h(\cdot)}\pt_{x_2}\hat u_n\|_{L^2(\Omega)}j^{-\frac{3}{2}}
	\end{split}
\end{equation*}
with $j\in\mb{N}$.  The same argument is developed again, we have 
\begin{equation*}
	\begin{split}
		\int_{(0,1)\times(1-j^{-1}, 1)}|\hat u_n(x)|dx
		&\leq \frac{2}{3}\|h^{-1}\|_{L^1(0,1)}^\frac{1}{2}\|\sqrt{h(\cdot)}\pt_{x_2}\hat u_n\|_{L^2(\Omega)} \left(1-(1-\frac{1}{j})^{\frac{3}{2}}\right),\\
		\int_{(0,j^{-1})\times (0,1)}|\hat u_n(x)|dx
		&\leq\frac{2}{3}\|\pt_{x_1}\hat u_n\|_{L^2(\Omega)}j^{-\frac{3}{2}},\\
		\int_{(1-j^{-1},1)\times (0,1)}|\hat u_n(x)|dx
		&\leq\frac{2}{3}\|\pt_{x_1}\hat u_n\|_{L^2(\Omega)}\left(1-(1-\frac{1}{j})^{\frac{3}{2}}\right). 
	\end{split}
\end{equation*}
Set 
\begin{equation*}
	\Omega_j=\{(x_1,{x_2})\in \Omega\mid x_1,{x_2}\in (0,j^{-1})\cup(1-j^{-1}, 1)\}, \ j\in\mb{N},
\end{equation*}
then 
\begin{equation*}
	\int_{\Omega_j}|\hat u_n(x)|dx\leq C\|\hat u_n\|_{\mcH_{h,0}^1(\Omega)}\max\left\{j^{-\frac{3}{2}},\left(1-(1-\frac{1}{j})^{\frac{3}{2}}\right)\right\},
\end{equation*}
where $C>0$ is a constant dependent only on $h$, which implies that there exists $j_0\in\mb{N}$, when $j\geq j_0$, we have 
\begin{equation*}
	\int_{\Omega_j}|\hat u_n(x)|dx<\epsilon, 
\end{equation*}
taking $G=\Omega-\ol \Omega_{j_0}$, we have proved that 
\begin{equation}\label{Se.3.11}
\int_{\Omega-\ol G}|\hat u_n(x)|dx<\epsilon.	
\end{equation}
 Let $\xi=(\xi_1,\xi_2)\in\mb{R}^2$, without loss of generality, we assume $\xi_1>0, \xi_2>0$, since
\begin{equation*}
	\begin{split}
		|\hat u_n(x_1+\xi_1,{x_2}+\xi_2)-\hat u_n(x_1,{x_2}+\xi_2)|
		&=\left|\int_{x_1}^{x_1+\xi_1}\pt_{x_1}\hat u_n(s,{x_2}+\xi_2) ds\right|\\
		&\leq \xi_1^\frac{1}{2}\left(\int_{x_1}^{x_1+\xi_1}\left|\pt_{x_1}\hat u_n(s,{x_2}+\xi_2)\right|^2 ds\right)^\frac{1}{2}\\
		&\leq \xi_1^\frac{1}{2}\left(\int_{\mb{R}}\left|\pt_{x_1}\hat u_n(s,{x_2}+\xi_2)\right|^2 ds\right)^\frac{1}{2},
	\end{split}
\end{equation*}
we obtain that 
\begin{equation*}
	\int_{\mb{R}}|\hat u_n(x_1+\xi_1,{x_2}+\xi_2)-\hat u_n(x_1,{x_2}+\xi_2)|d{x_2}\leq \xi_1^\frac{1}{2}\|\pt_{x_1}\hat u_n\|_{L^2(\Omega)}.  
\end{equation*}
hence
\begin{equation}\label{Se.3.12}
	\int_{\mb{R}^2}|\hat u_n(x_1+\xi_1,{x_2}+\xi_2)-\hat u_n(x_1,{x_2}+\xi_2)|dx\leq \xi_1^\frac{1}{2}\|\pt_{x_1}\hat u_n\|_{L^2(\Omega)}.  
\end{equation}
By the same way as \eqref{Se.3.10}, we obtain that 
\begin{equation*}
	\int_{\mb{R}}|\hat u_n(x_1,{x_2}+\xi_2)-\hat u_n(x_1,{x_2})|d{x_2}\leq \xi_2^\frac{1}{2}\|h^{-1}\|_{L^1(0,1)}^\frac{1}{2}\|\sqrt{h(\cdot)}\pt_{x_2}\hat u_n\|_{L^2(\Omega)}.  
\end{equation*}
therefore
\begin{equation}\label{Se.3.13}
	\int_{\mb{R}^2}|\hat u_n(x_1,{x_2}+\xi_2)-\hat u_n(x_1,{x_2})|dx\leq \xi_2^\frac{1}{2}\|h^{-1}\|_{L^1(0,1)}^\frac{1}{2}\|\sqrt{h(\cdot)}\pt_{x_2}\hat u_n\|_{L^2(\Omega)}.   
\end{equation}
From
\begin{equation*}
	\begin{split}
		|\hat u_n(x+\xi)-\hat u_n(x)|
		&\leq |\hat u_n(x_1+\xi_1,{x_2}+\xi_2)-\hat u_n(x_1, {x_2}+\xi_2)|+|\hat u_n(x_1,{x_2}+\xi_2)-\hat u_n(x_1,{x_2})|
	\end{split}
\end{equation*}
and \eqref{Se.3.12}, \eqref{Se.3.13} to lead 
\begin{equation*}
	\int_{\mb{R}^2}|\hat u_n(x+\xi)-\hat u_n(x)|dx \leq \xi_1^\frac{1}{2}\|\pt_{x_1}\hat u_n\|_{L^2(\Omega)}+\xi_2^\frac{1}{2}\|h^{-1}\|_{L^1(0,1)}^\frac{1}{2}\|\sqrt{h(\cdot)}\pt_{x_2}\hat u_n\|_{L^2(\Omega)},
\end{equation*}
this implies that
\begin{equation}\label{Se.3.14}
\int_{\mb{R}^2}|\hat u_n(x+\xi)-\hat u_n(x)|dx<\epsilon	
\end{equation}
for $|\xi|=\sqrt{\xi_1^2+\xi_2^2}$ small enough. 	Combining inequalities \eqref{Se.3.9}, \eqref{Se.3.11} and \eqref{Se.3.14}, we can get
 \begin{equation*}
 	\begin{split}
 		&\int_{\mb{R}^2}|\tilde u_n(x_1+\xi_1,{x_2}+\xi_2)-\tilde u_n(x_1,{x_2})|dx \\
 		&=\int_{\mb{R}^2}|\tilde u_n(x_1+\xi_1,{x_2}+\xi_2)-\hat u_n(x_1+\xi_1,{x_2}+\xi_2)|dx+\int_{\mb{R}^2}|\hat u_n(x_1+\xi_1,{x_2}+\xi_2)-\hat u_n(x_1,{x_2})|dx\\
 		&\hspace{6mm}+\int_{\mb{R}^2}|\hat u_n(x_1,{x_2})-\tilde u_n(x_1,{x_2})|dx\\
 		&\leq 2\|\tilde u_n-\hat u_n\|_{L^2(\mb{R}^2)}+\int_{\mb{R}^2}|\hat u_n(x_1+\xi_1,{x_2}+\xi_2)-\hat u_n(x_1,{x_2})|dx\\
 		&\leq 3\epsilon,
 	\end{split}
 \end{equation*}
and 
\begin{equation*}
		\begin{split}
			\int_{\Omega-\ol G}|\tilde u_n(x_1, {x_2})|dx
			&\leq \int_{\Omega-\ol G}|\tilde u_n(x_1,{x_2})-\hat u_n(x_1,{x_2})|dx+\int_{\Omega-\ol G} |\hat u_n(x_1,{x_2})|dx\leq 2\epsilon.
		\end{split}
	\end{equation*} 
so far the inequality \eqref{Se.3.8} is verified, then we know that $\{u_n\}_{n\in\mb{N}}$ is Cauchy in $L^1(\Omega)$.

According to Lemma \ref{Le.3.2}, for $  q=\frac{2}{1-\theta}>2$, we have $\{u_n\}_{n\in\mb{N}}$ is bounded in $L^q(\Omega)$. By interpolation inequality:
\begin{equation}\label{Se.3.15}
	\|u_n-u_m\|_{L^2(\Omega)}\le \|u_n-u_m\|_{L^1(\Omega)}^\alpha \|u_n-u_m\|_{L^q(\Omega)}^{1-\alpha}\le (2C)^{1-\alpha}\|u_n-u_m\|_{L^1(\Omega)}^{\alpha}
\end{equation} 
for some $\alpha \in (0,1)$, given that $\{u_n\}_{n\in\mb{N}}$ is Cauchy in $L^1(\Omega)$, choosing $n,m$ sufficiently large shows that $\{u_n\}_{n\in\mb{N}}$ is Cauchy in $L^2(\Omega)$ by \eqref{Se.3.15}, this argument completes the proof of Theorem \ref{Th.3.3}. 
\end{proof}

\section{Weak solution and its regularity}

\begin{definition}\label{D.2.6}
	(i) The {\it bilinear form} $B[\cdot,\cdot]$ associated with the sub-elliptic operator $L$ is 
	\begin{equation*}
		B[u,v]=\int_\Omega (\pt_{x_1}u)(\pt_{x_1}v)+\left(\sqrt{h(x_1)}\pt_{x_2}u\right)\left(\sqrt{h(x_1)}   \pt_{x_2}v  \right) + Vuv dx
	\end{equation*}
	for $u,v\in \mcH_{h,0}^1(\Omega)$.
	
	(ii) We call $u\in\mcH_{h,0}^1(\Omega)$ is a {\it weak solution} to the sub-elliptic equation \eqref{e1}, if 
	\begin{equation*}
		B[u,v]=(f, v)_{L^2(\Omega)}
	\end{equation*}
	for all $v\in \mcH_{h,0}^1(\Omega)$. 
\end{definition}

\begin{theorem}\label{T.2.6}
	Let $f\in L^2(\Omega)$  be arbitrary given function, then the sub-elliptic equation \eqref{e1} has a unique solution $u\in \mcH_{h,0}^1(\Omega)$. 
\end{theorem}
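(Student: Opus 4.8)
The plan is to apply the Lax--Milgram theorem to the bilinear form $B[\cdot,\cdot]$ of Definition \ref{D.2.6} on the Hilbert space $\mcH_{h,0}^1(\Omega)$. As a preliminary step I would record that $\mcH_{h,0}^1(\Omega)$ is a closed subspace of the Hilbert space $\mcH_h^1(\Omega)$ (Lemma \ref{L.2.1}), hence itself a Hilbert space, and that by Remark \ref{202306071004} it may be equipped with the equivalent norm $\|u\|_{\mcH_{h,0}^1(\Omega)}=\big(\int_\Omega |\partial_{x_1}u|^2+h(x_1)|\partial_{x_2}u|^2\,dx\big)^{1/2}$; the weighted Poincar\'e inequality \eqref{202302092134} will be used several times.

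Next I would verify the two hypotheses of Lax--Milgram. \emph{Boundedness:} for $u,v\in\mcH_{h,0}^1(\Omega)$, the Cauchy--Schwarz inequality gives
\begin{equation*}
\left|\int_\Omega (\partial_{x_1}u)(\partial_{x_1}v)+\big(\sqrt{h(x_1)}\partial_{x_2}u\big)\big(\sqrt{h(x_1)}\partial_{x_2}v\big)\,dx\right|\le \|u\|_{\mcH_{h,0}^1(\Omega)}\|v\|_{\mcH_{h,0}^1(\Omega)},
\end{equation*}
while $\big|\int_\Omega Vuv\,dx\big|\le M\|u\|_{L^2(\Omega)}\|v\|_{L^2(\Omega)}\le MC^2\|u\|_{\mcH_{h,0}^1(\Omega)}\|v\|_{\mcH_{h,0}^1(\Omega)}$ by \eqref{202302092134}, so $|B[u,v]|\le (1+MC^2)\|u\|_{\mcH_{h,0}^1(\Omega)}\|v\|_{\mcH_{h,0}^1(\Omega)}$. \emph{Coercivity:} since $V\ge m\ge 0$ a.e.,
\begin{equation*}
B[u,u]=\int_\Omega |\partial_{x_1}u|^2+h(x_1)|\partial_{x_2}u|^2+Vu^2\,dx\ \ge\ \int_\Omega |\partial_{x_1}u|^2+h(x_1)|\partial_{x_2}u|^2\,dx=\|u\|_{\mcH_{h,0}^1(\Omega)}^2 .
\end{equation*}
Finally the functional $v\mapsto (f,v)_{L^2(\Omega)}$ is linear and, again by \eqref{202302092134}, satisfies $|(f,v)_{L^2(\Omega)}|\le \|f\|_{L^2(\Omega)}\|v\|_{L^2(\Omega)}\le C\|f\|_{L^2(\Omega)}\|v\|_{\mcH_{h,0}^1(\Omega)}$, hence is bounded. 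Lax--Milgram then yields a unique $u\in\mcH_{h,0}^1(\Omega)$ with $B[u,v]=(f,v)_{L^2(\Omega)}$ for all $v\in\mcH_{h,0}^1(\Omega)$, which is precisely the asserted weak solution. Since $B$ is moreover symmetric, one could equally observe that $B[\cdot,\cdot]$ is an inner product equivalent to $(\cdot,\cdot)_{\mcH_{h,0}^1(\Omega)}$ and invoke the Riesz representation theorem directly.

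As for where the difficulty genuinely lies: the theorem itself is a routine consequence of Lax--Milgram, and all the substantive work has already been carried out in Sections 2--3 --- completeness of $\mcH_h^1(\Omega)$, density of $C_0^\infty(\Omega)$ (equivalently of ${\rm Lip}_0(\Omega)$) in $\mcH_{h,0}^1(\Omega)$, and above all the weighted Poincar\'e inequality of Remark \ref{202306071004}, which rests on the Sawyer--Wheeden-type estimate of Lemma \ref{Le.3.1}. The only point demanding a little attention in the present argument is that coercivity must hold without any positivity of $V$ beyond $V\ge 0$; this is exactly why one passes to the equivalent $\mcH_{h,0}^1$-norm rather than the full graph norm.
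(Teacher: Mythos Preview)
Your proposal is correct and follows essentially the same approach as the paper: verify boundedness and coercivity of $B[\cdot,\cdot]$ on $\mcH_{h,0}^1(\Omega)$ using the weighted Poincar\'e inequality of Remark \ref{202306071004}, check that $v\mapsto (f,v)_{L^2(\Omega)}$ is a bounded linear functional, and apply Lax--Milgram. The only cosmetic difference is that the paper works with the full graph norm and obtains $B[u,u]\ge\frac{1}{2}\|u\|_{\mcH_{h,0}^1(\Omega)}^2$, whereas you pass to the equivalent seminorm first and get coercivity constant $1$.
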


\begin{proof}
	From Lemma \ref{L.2.1} we know that $(\mcH_{h,0}^1(\Omega),(\cdot,\cdot)_{\mcH_{h}^1(\Omega)})$ is a Hilbert space. Given that Remark \ref{202306071004}  we have
	\begin{equation*}
		B[u,u]=\int_\Omega |\pt_{x_1}u|^2+ \left|\sqrt{h(x_1)}\pt_{x_2} u\right|^2+ Vu^2dx
		\ge \frac{1}{2}\|u\|_{\mcH_{h,0}^1(\Omega)}^2		
	\end{equation*}
	for all $u\in\mcH_{h,0}^1(\Omega)$. Moreover, thanks to the H\"older inequality, we deduce that 
	\begin{align*}
		\begin{split}
			&|B[u,v]|\\
			&\leq \|\pt_{x_1} u\|_{L^2(\Omega)}\|\pt_{x_1}v\|_{L^2(\Omega)}+\|\sqrt{h(\cdot)}\pt_{x_2} u\|_{L^2(\Omega)}\|\sqrt{h(\cdot)} \pt_{x_2}v     \|_{L^2(\Omega)}+M\|u\|_{L^2(\Omega)}\|v\|_{L^2(\Omega)}\\
			&\leq \max\{1,M\} \left( \|\pt_{x_1} u\|_{L^2(\Omega)}\|\pt_{x_1}v\|_{L^2(\Omega)}+\|\sqrt{h(\cdot)}\pt_{x_2} u\|_{L^2(\Omega)}\|\sqrt{h(\cdot)}    \pt_{x_2}v   \|_{L^2(\Omega)}+\|u\|_{L^2(\Omega)}\|v\|_{L^2(\Omega)}\right)\\
			&\leq C\left(\|u\|_{L^2(\Omega)}^2+\|\pt_{x_1}u\|_{L^2(\Omega)}^2+\|\sqrt{h(\cdot)}\pt_{x_2}u\|_{L^2(\Omega)}^2\right)^\frac{1}{2}\left(\|v\|_{L^2(\Omega)}^2+\|\pt_{x_1}v\|_{L^2(\Omega)}^2+\|\sqrt{h(\cdot)}\pt_{x_2}v\|_{L^2(\Omega)}^2\right)^\frac{1}{2}\\
			&=C \|u\|_{\mcH_{h,0}^1(\Omega)}\|v\|_{\mcH_{h,0}^1(\Omega)}
		\end{split}
	\end{align*}
	for all $u,v\in \mcH_{h,0}^1(\Omega)$.  For each  $v\in \mcH_{h,0}^1(\Omega)$, it is briefly find that $f: \mcH_{h,0}^1(\Omega)\ra \mb{R}$ is a linear functional, and note that
	\begin{equation*}
		(f, v)_{L^2(\Omega)} \leq \|f\|_{L^2(\Omega)}\|v\|_{L^2(\Omega)}\leq \|f\|_{L^2(\Omega)}\|v\|_{\mcH_{h,0}^1(\Omega)},
	\end{equation*}
	therefore $f$ is a bounded linear functional on $\mcH_{h,0}^1(\Omega)$. Applying the Lax-Milgram theorem \cite{EvansPartial}, the desired result is proved.
\end{proof}

We denote $\Om'\subset\subset\Om$ by ${\Om'}\subset\Om$ is an open subset of $\Om$ and $\overline{\Om'}$ is a compact subset of $\Om$. 

\begin{theorem}\label{Th.k.1}
	$(1)$  Let $D_i^lu(x)=\frac{u(x+le_i)-u(x)}{l} \ (i=1,2)$ denote the $i\raisebox{0mm}{-}th$ difference quotient of size $l$ for $x\in \Omega'$, $l\in \mathbb{R}$, $0<|l|<\dist(\Omega',\partial{\Omega})$. Suppose $u\in \mcH_{h,0}^1(\Omega)$, then for any $\Omega' \subset\subset \Omega$, we have 
	\begin{equation*}
		\|D^lu\|_{L^2(\Omega')}	\le C\|\sqrt{h(x_1)}Du\|_{L^2(\Omega)}\le C\|u\|_{\mcH_{h,0}^1(\Omega)}
	\end{equation*}	
	for some constant $C$ and all $0<|l|<\frac{1}{2}\dist(\Omega',\partial{\Omega})$, where $D^lu=(D_1^lu,D_2^lu)$.
	
	$(2)$ Let $D^lu$ be defined as shown in $(1)$.  Suppose  $u\in L^2(\Omega')$, and there is a constant $C$ such that
	\begin{equation*}
		\|D^lu\|_{L^2(\Omega')}\le C
	\end{equation*}
	for all $0<|l|<\frac{1}{2}\dist(\Omega',\partial{\Omega})$, then $u\in H^1(\Omega')$ with $\|Du\|_{L^2(\Omega')}\le C.$
\end{theorem}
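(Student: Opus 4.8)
The plan is to establish the two halves by separate classical mechanisms, each localized away from the degeneracy of the weight. The guiding remark is that $\Omega'\subset\subset\Omega$ keeps $\overline{\Omega'}$, and any slightly larger compactly contained set, away from the set $\{x_1=0\}$ on which $h$ vanishes; on such a set $h(x_1)$ is bounded below by a positive constant, so the weighted factor $\sqrt{h(x_1)}$ is comparable to $1$ there and the whole estimate reduces to a non-degenerate one. Part $(1)$ then follows the standard ``an $L^2$-gradient forces $L^2$-bounded difference quotients'' argument, and part $(2)$ is the weight-free converse.

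For part $(1)$ I would set $d:=\dist(\Omega',\partial\Omega)$, take $\Omega''$ to be the open $\tfrac12 d$-neighbourhood of $\Omega'$ (so $\Omega'\subset\subset\Omega''\subset\subset\Omega$ and $\overline{\Omega''}\cap\{x_1=0\}=\emptyset$), and fix $c>0$ with $h(x_1)\ge c$ on $\overline{\Omega''}$, which is possible since $h$ is continuous and strictly positive on $(0,1]$. First I would prove the bound for $u\in C_0^\infty(\Omega)$: from $D_i^l u(x)=\int_0^1\partial_{x_i}u(x+sle_i)\,ds$, Cauchy--Schwarz in $s$, and Fubini, noting that $x+sle_i\in\Omega''$ whenever $x\in\Omega'$, $s\in[0,1]$, $0<|l|<\tfrac12 d$, one gets
\[
\|D^l u\|_{L^2(\Omega')}^2\le\int_{\Omega''}|Du|^2\,dx\le\frac{1}{c}\int_{\Omega''}h(x_1)|Du|^2\,dx\le\frac{1}{c}\,\|\sqrt{h(x_1)}Du\|_{L^2(\Omega)}^2 .
\]
For general $u\in\mcH_{h,0}^1(\Omega)$ I would take $u_n\in C_0^\infty(\Omega)$ with $u_n\to u$ in $\mcH_h^1(\Omega)$ and pass to the limit in this inequality: the right-hand side converges because $h\in L^\infty$ yields $\sqrt{h}\,Du_n\to\sqrt{h}\,Du$ in $L^2(\Omega)$, and the left-hand side converges because, $l$ being fixed and $\Omega'+le_i\subset\Omega$, $u_n\to u$ in $L^2(\Omega)$ gives $u_n(\cdot+le_i)\to u(\cdot+le_i)$ in $L^2(\Omega')$ and hence $D_i^l u_n\to D_i^l u$ in $L^2(\Omega')$. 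This is the first asserted inequality, with a constant ($c^{-1/2}$) independent of $l$. The second then follows from $\|\sqrt{h(x_1)}Du\|_{L^2(\Omega)}^2\le\max\{\|h\|_{L^\infty},1\}\,\|u\|_{\mcH_{h,0}^1(\Omega)}^2$.

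For part $(2)$, which does not involve $h$ at all, I would run the textbook argument (cf.\ \cite{EvansPartial}): fix a direction $i$ and a sequence $l_k\to0$; the hypothesis makes $\{D_i^{l_k}u\}$ bounded in $L^2(\Omega')$, so along a subsequence $D_i^{l_{k_j}}u\rightharpoonup v_i$ weakly in $L^2(\Omega')$. Testing against $\varphi\in C_0^\infty(\Omega')$ and using the discrete integration-by-parts identity $\int_{\Omega'}(D_i^l u)\varphi=-\int_{\Omega'}u\,(D_i^{-l}\varphi)$ (valid once $|l|<\dist(\supp\varphi,\partial\Omega')$) together with $D_i^{-l_{k_j}}\varphi\to\partial_{x_i}\varphi$ uniformly, one finds $\int_{\Omega'}v_i\varphi=-\int_{\Omega'}u\,\partial_{x_i}\varphi$, so $v_i=\partial_{x_i}u$ in the weak sense and $u\in H^1(\Omega')$; weak lower semicontinuity of the $L^2$-norm gives $\|\partial_{x_i}u\|_{L^2(\Omega')}\le\liminf_j\|D_i^{l_{k_j}}u\|_{L^2(\Omega')}\le C$, hence $\|Du\|_{L^2(\Omega')}\le C$.

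I do not foresee a real obstacle: the only genuinely delicate point is the degeneracy of $h$ at $x_1=0$ in part $(1)$, which is precisely why the estimate is confined to compactly contained subdomains --- and the passage to $\Omega''$ neutralizes it, after which only classical facts (the integral representation of difference quotients, density of $C_0^\infty(\Omega)$ in $\mcH_{h,0}^1(\Omega)$, and weak $L^2$-compactness) are used.
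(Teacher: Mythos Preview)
Your proposal is correct and follows essentially the same route as the paper. Both arguments hinge on the same observation---that on a compactly contained intermediate set ($\Omega''$ for you, $[\delta,1-\delta]\times(0,1)$ in the paper) the weight $h(x_1)$ is bounded below by a positive constant---then apply the integral representation $D_i^lu(x)=\int_0^1\partial_{x_i}u(x+sle_i)\,ds$, Cauchy--Schwarz, Fubini, and density of $C_0^\infty(\Omega)$; for part~(2) both simply invoke the standard converse from Evans. Your write-up is in fact a bit cleaner: you isolate the intermediate inequality $\|D^lu\|_{L^2(\Omega')}\le C\|\sqrt{h(x_1)}Du\|_{L^2(\Omega)}$ explicitly (the paper's chain jumps straight to the $\mcH_{h,0}^1$-norm) and you spell out the limit passage in the density step, but these are presentational rather than substantive differences.
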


\begin{proof}
	$(1)$ Suppose $u\in C_0^\infty(\Omega)$, for any $x\in \Omega'$, $i=1,2$, $0<|l|<\frac{1}{2}\dist(\Omega',\partial{\Omega})$, we see that
	\begin{equation*}
		u(x+le_i)-u(x)=\int_0^1 \partial_{x_i}u(x+tle_i)dt\cdot le_i,
	\end{equation*}
	so that 
	\begin{equation*}
		|u(x+le_i)-u(x)|\le |l|\int_0^1 |\partial_{x_i}u(x+tle_i)|dt, 
	\end{equation*}
	i.e.
	\begin{equation*}
		|D_i^lu(x)|\le \int_0^1 |\partial_{x_i}u(x+tle_i)|dt.	
	\end{equation*}
	Therefore for any $\delta \in (0,1)$, by Cauchy inequality, we obtain that 
	\begin{equation*}
		\begin{split}
			\inf_{x_1\in [\delta,1-\delta]}
			&h(x_1)  
			\int_{\Omega'} |D^lu(x)|^2dx\\
			&\le \int_{\Omega'} h(x_1)|D^lu(x)|^2dx =\int_{\Om'}h(x_1)\left(|D^l_1u(x)|^2+|D^l_2u(x)|^2\right)dx\\
			& \leq \int_{\Om'}h(x_1)\sum_{i=1}^2\left(\int_0^1|\partial_{x_i}u(x+tle_i)|dt \right)^2dx\le \sum_{i=1}^{2}\int_{\Omega'}h(x_1)\int_0^1|\partial_{x_i}u(x+tle_i)|^2dtdx\\
			&= \int_0^1\int_{\Om'}h(x_1) |\partial_{x_1}u(x_1+tl,x_2)|^2+h(x_1)|\partial_{x_2}u(x_1, x_2+tl)|^2 dxdt\\
			&\le C \left(\int_\Om |\partial_{x_1}u(x)|^2dx+\int_\Om |\sqrt{h(x_1)}\partial_{x_2}u(x)|^2dx\right)=C\|u\|_{\mcH_{h,0}^1(\Omega)}^2. 
		\end{split}
	\end{equation*}
	Since $C_0^\infty(\Omega)$ is dense in $\mcH_{h,0}^1(\Omega)$, the above inequality is established for any $u\in \mcH_{h,0}^1(\Omega)$.

	$(2)$ For this proof, we can refer to Theorem 3 \cite[Chapter 5.8.2]{EvansPartial}.	
\end{proof}

\begin{theorem}\label{Th.k.2}
	Assuming that $h(x_1)$ and $V(x)$ are defined as given in the Introduction (Section 1), and $f\in L^2(\Omega)$. Suppose that $u\in \mcH_{h,0}^1(\Omega)$ is the weak solution for the problem $Lu=f$ on $\Omega$, then $u\in H_{\rm loc}^2(\Omega)$. 
\end{theorem}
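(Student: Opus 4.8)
The plan is to run the classical interior $H^2$ regularity argument via difference quotients, in the spirit of \cite[Chapter 6.3]{EvansPartial}, exploiting the fact that the degeneracy of $L$ lives only on $\{x_1=0\}\subset\partial\Omega$: for any $\Omega'\subset\subset\Omega$ there is a constant $c_0>0$ with $h(x_1)\ge c_0$ on $\Omega'$, so on such subsets $L$ is uniformly elliptic with $C^1$ (hence Lipschitz) coefficients, written in divergence form with coefficient matrix $\mathrm{diag}(1,h(x_1))$, and the restriction of $\|\cdot\|_{\mcH_h^1}$ to functions supported in $\Omega'$ is equivalent to the ordinary $H^1$-norm. In particular, since $\sqrt{h}\,\pt_{x_2}u\in L^2(\Omega)$, we already have $\pt_{x_2}u\in L^2_{\rm loc}(\Omega)$, hence $u\in H^1_{\rm loc}(\Omega)$; the point is to bootstrap this to $H^2_{\rm loc}$.

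First I would fix $\Omega'\subset\subset\Omega''\subset\subset\Omega$ and a cut-off $\zeta\in C_0^\infty(\Omega'')$ with $\zeta\equiv 1$ on $\Omega'$, $0\le\zeta\le 1$, and choose $c_0>0$ with $h\ge c_0$ on $\Omega''$. For $k\in\{1,2\}$ and $0<|l|<\tfrac12\dist(\Omega'',\pt\Omega)$ I would insert the test function $v=-D_k^{-l}\big(\zeta^2 D_k^l u\big)$ into the weak formulation $B[u,v]=(f,v)_{L^2(\Omega)}$ of Definition \ref{D.2.6}; this $v$ is admissible because it has compact support in $\Omega$, and there $\mcH_{h,0}^1(\Omega)$ contains $H_0^1$ with equivalent norm (Lemma \ref{202306061911}), while $\nabla(\zeta^2 D_k^l u)\in L^2$ thanks to $\nabla u\in L^2_{\rm loc}$. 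Using the discrete integration-by-parts identity $\int(D_k^{-l}\varphi)\psi\,dx=-\int\varphi\,(D_k^l\psi)\,dx$ and the discrete product rule, the principal part of $B[u,v]$ produces the good term $\int_{\Omega''}\zeta^2\big(|D_k^l\pt_{x_1}u|^2+h(x_1)|D_k^l\pt_{x_2}u|^2\big)\,dx$ together with remainder terms in which at least one factor is a derivative of $\zeta$, plus---only when $k=1$---terms carrying the bounded factor $D_1^l h$ (here one uses $\|h'\|_{L^\infty(\Omega'')}<\infty$; when $k=2$ no such term appears since $h$ does not depend on $x_2$). The zeroth-order contribution $\int V u\,v\,dx$ and the right-hand side $(f,v)_{L^2}$ are estimated directly using $\|V\|_\infty\le M$ and $\|f\|_{L^2}$.

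Then all remainder terms are controlled by Cauchy's inequality with a small parameter $\epsilon$: one bounds $\|v\|_{L^2(\Omega)}\le \|\zeta D_k^l\nabla u\|_{L^2(\Omega'')}+C\|D_k^l u\|_{L^2(\Omega'')}$ and $\|D_k^l u\|_{L^2(\Omega'')}\le C\|u\|_{\mcH_{h,0}^1(\Omega)}$ by Theorem \ref{Th.k.1}(1), and the $\epsilon$-multiples of $\zeta^2|D_k^l\nabla u|^2$ are absorbed into the left-hand side, which is legitimate since on $\Omega''$ one has $\zeta^2\big(|D_k^l\pt_{x_1}u|^2+h|D_k^l\pt_{x_2}u|^2\big)\ge c_0\,\zeta^2|D_k^l\nabla u|^2$. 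This yields, for $k=1,2$ and all small $l$, the uniform estimate $\|D_k^l\nabla u\|_{L^2(\Omega')}\le C\big(\|f\|_{L^2(\Omega)}+\|u\|_{\mcH_{h,0}^1(\Omega)}\big)$ with $C$ independent of $l$. Applying Theorem \ref{Th.k.1}(2) to $\pt_{x_1}u$ and $\pt_{x_2}u$ on $\Omega'$ gives $\pt_{x_j}u\in H^1(\Omega')$ for $j=1,2$, i.e. $u\in H^2(\Omega')$; since $\Omega'\subset\subset\Omega$ is arbitrary, $u\in H^2_{\rm loc}(\Omega)$. I expect the only real work to be bookkeeping: tracking the extra difference-quotient terms produced by the variable coefficient $h(x_1)$ in the $\pt_{x_2x_2}$-part when $k=1$, and verifying admissibility of $v$ in $\mcH_{h,0}^1(\Omega)$---both are dispatched by working on $\Omega''\subset\subset\Omega$, where $h$ is comparable to a positive constant and $C^1$, so that the computation reduces to the standard uniformly elliptic case.
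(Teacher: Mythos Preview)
Your proposal is correct and follows essentially the same route as the paper's proof: both fix nested compact subsets, insert the test function $v=-D_k^{-l}(\zeta^2 D_k^l u)$ into the weak formulation, use the positive lower bound on $h$ away from $\{x_1=0\}$ to get uniform ellipticity, control remainders via Cauchy-with-$\epsilon$ and Theorem~\ref{Th.k.1}(1), and finish with Theorem~\ref{Th.k.1}(2). Your write-up is in fact a bit more careful than the paper's in distinguishing the $k=1$ and $k=2$ cases for the term $D_k^l a_{ij}$ and in explicitly checking admissibility of $v$, but the argument is the same.
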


\begin{proof}
	1. For any subset $\Omega'\subset\subset \Omega$, we may choose an open set $W$ such that $\Omega' \subset \subset W \subset \subset \Omega$. In addition, we select a  function $ \eta \in C_0^\infty(\mathbb{R}^2)$	such that $\eta \equiv1$ on $\Omega'$, $\eta \equiv 0$ on $\mathbb{R}^2- W$ and $0\le\eta \le 1$.
	Note that $u$ is the weak solution for $Lu=f$, then for any $v\in \mcH_h^1(\Omega)$:
	\begin{equation}\label{Th.2.7.1}
		\sum_{i,j=1}^{2}\int_{\Omega} a_{ij}(x)\pt_{x_i}u \pt_{x_j}vdx=\int_{\Omega} fv-Vuvdx,
	\end{equation}
	where $a_{11}=1,a_{12}=a_{21}=0,a_{22}=h(x_1)$. Let 
	\begin{equation*}
		A=\sum\limits_{i,j=1}^{2}\int_{\Omega} a_{ij}(x)\pt_{x_i}u \pt_{x_j}vdx,\quad B=\int_{\Omega} fv-Vuvdx, 
	\end{equation*}
	then $A=B$.
	
	2. Let $0<|l|<\frac{1}{2} \min\{\dist(\Omega',\pt W),\dist(W,\pt \Omega)\} $ and consider that $|l|$ be sufficiently small, then substitute $v=-D_k^{-l}(\eta^2D_k^lu)$ into \eqref{Th.2.7.1}, $k\in \{1,2\}$. Then
		\begin{align*}
			A=&	-\sum\limits_{i,j=1}^{2}\int_{\Omega} a_{ij}(x)\pt_{x_i}u \pt_{x_j}\left(D_k^{-l}(\eta^2D_k^lu)\right)dx\\
			&=\sum\limits_{i,j=1}^{2}\int_{\Omega} D_k^l(a_{ij}\pt_{x_i}u)	\pt_{x_j}(\eta^2D_k^lu)dx\\
			&=\sum\limits_{i,j=1}^{2}\int_{\Omega}  a_{ij}^l(D_k^l\pt_{x_i}u)\pt_{x_j}(\eta^2D_k^lu)+(D_k^la_{ij})\pt_{x_i}u\pt_{x_j}(\eta^2D_k^lu)	dx\\
			&=\sum\limits_{i,j=1}^{2}\int_{\Omega}a_{ij}^l(D_k^l\pt_{x_i}u)(D_k^l\pt_{x_j}u) \eta^2dx+\sum\limits_{i,j=1}^{2}\int_{\Omega}\bigg(2\eta (\pt_{x_j}\eta) a_{ij}^l(D_k^l\pt_{x_i}u)D_k^lu\\
			&  \quad\quad \quad +(D_k^la_{ij})(\pt_{x_i}u)(D_k^l\pt_{x_j}u)\eta^2+2\eta (\pt_{x_j}\eta)(D_k^la_{ij})(\pt_{x_i}u)D_k^lu\bigg)dx\\
			&=A_1+A_2,
		\end{align*}
	where $a_{ij}^l(x)=a_{ij}(x+le_k)$. According to the definitions of $h(x_1)$ and $\eta$, we have
	\begin{equation}\label{Th.2.7.3}
		A_1\ge \min\left\{1,\inf_{x_1\in [\delta,1-\delta]}{h(x_1)}\right\} \int_{\Omega} \eta^2 |D_k^lDu|^2dx \ge \theta  \int_{\Omega} \eta^2 |D_k^lDu|^2dx
	\end{equation}
	for some proper constant $\theta$ and $\delta \in (0,1)$, and 
	\begin{equation*}
		|A_2|\le C \int_{\Omega} \left(\eta |D_k^lDu||D_k^lu|+\eta|D_k^lDu||Du|+\eta|D_k^lu||Du|\right)dx. 
	\end{equation*}
	Furthermore, by  $\supp \eta\subset \overline{W}$ and Cauchy's inequality with $\epsilon$, then
	\begin{equation*}
		|A_2|\le \epsilon \int_{\Omega} \eta^2|D_k^lDu|^2dx+\frac{C}{\epsilon}\int_{W} \left(|D_k^lu|^2+|Du|^2\right)dx.
	\end{equation*}
	By invoking the result $(1)$ of Theorem \ref{Th.k.1}, we have $\int_{W} |D_k^lu|^2 \le C\|u\|_{\mcH_{h,0}^1(\Omega)}^2$. It is not hard to find that 
	\begin{equation*}
	\int_{W} |Du|^2dx\le
	\big(\min\{1,\inf\limits_{x_1\in [\delta,1-\delta] }h(x_1)\}\big)^{-1}\|u\|_{\mcH_{h,0}^1(\Omega)}^2  \le C\|u\|_{\mcH_{h,0}^1(\Omega)}^2 
	\end{equation*}
	for $\delta \in (0,1)$. We may choose $\epsilon=\frac{\theta}{2}$, hence
	\begin{equation}\label{Th.2.7.4}
		|A_2|\le \frac{\theta}{2} \int_{\Omega} \eta^2 |D_k^lDu|^2dx + C \|u\|_{\mcH_{h,0}^1(\Omega)}^2.
	\end{equation}
	Combining \eqref{Th.2.7.3} with \eqref{Th.2.7.4}, we can get
	\begin{equation}\label{Th.2.7.5} 
		A \ge \frac{\theta}{2} \int_{\Omega} \eta^2 |D_k^lDu|^2dx -C\|u||_{\mcH_{h,0}^1(\Omega)}^2.	
	\end{equation}
	
	3. Since $v=-D_k^{-l}(\eta^2D_k^lu)$, by the $(1)$ of Theorem \ref{Th.k.1}, 
	\begin{equation*}
		\begin{split}
			\int_{\Omega} |v|^2dx &\le  C \int_{\Omega} \left|\sqrt{h(x_1)}D\left(\eta^2 D_k^lu\right)\right|^2dx\\
			 &\le C\int_{W} |D_k^lu|^2+\eta^2|D_k^lDu|^2 dx\\
			&\le  C\|u||_{\mcH_{h,0}^1(\Omega)}^2+C \int_{\Omega} \eta^2|D_k^lDu|^2dx.
		\end{split}
	\end{equation*}
	By the Cauchy's inequality with $\epsilon$, we have
	\begin{equation*}
		\begin{split}		
			|B|&\le C\int_{\Omega} |f||v|+|u||v| dx\\
			&\le \epsilon \int_{\Omega} \eta^2|D_k^lDu|^2dx+\frac{C}{\epsilon}\int_{\Omega}|f|^2+|u|^2 dx+\frac{C}{\epsilon}\|u||_{\mcH_{h,0}^1(\Omega)}^2.
		\end{split}	
	\end{equation*}
	Similarly, we choose $\epsilon=\frac{\theta}{4}$, then
	\begin{equation}\label{Th.2.7.6}
		|B|\le 	\frac{\theta}{4} \int_{\Omega} \eta^2|D_k^lDu|^2dx+C\int_{\Omega} |f|^2+|u|^2 dx+C\|u||_{\mcH_{h,0}^1(\Omega)}^2.
	\end{equation}
	
	4. According to \eqref{Th.2.7.5} and \eqref{Th.2.7.6}, we obtain that
	\begin{equation*}
		\frac{\theta}{4} \int_{\Omega} \eta^2|D_k^lDu|^2dx\le C\int_{\Omega} |f|^2+|u|^2 dx+C\|u||_{\mcH_{h,0}^1(\Omega)}^2,
	\end{equation*}
	therefore 
	\begin{equation*}
	 \int_{\Omega'} |D_k^lDu|^2dx\le C\int_{\Omega} |f|^2+|u|^2 dx+C\|u||_{\mcH_{h,0}^1(\Omega)}^2, \ k=1,2,
	\end{equation*}
for any sufficiently small $|l| \neq 0$. Using the result $(2)$ of Theorem \ref{Th.k.1}, we know $Du\in H^1(\Omega')$, hence $u\in H_{\rm loc}^2(\Omega)$.
\end{proof}

\section{Fundamental gap}

This section is mainly to characterize the optimal potentials over the class $S$, some of ideas were developed in \cite{Ashbaugh1991ON,hislop2012introduction,pankov2006introduction,bookPerturbation,karaa1998extremal}. Before that,  we will obtain basic properties of spectral theory for the boundary-value problem:
\begin{equation}\label{Se3.1}
	\begin{cases}
		Lu=\lambda u, & x\in \Omega, \\
		u=0, & x\in\partial \Omega,
	\end{cases} 
\end{equation}
where the operator $L$ is defined as \eqref{e3}. 

\begin{lemma}\label{Le.3.4}
	For any given $f\in L^2(\Omega)$, define
	\begin{equation}
		S:L^2(\Omega)\rightarrow L^2(\Omega),\ f\mapsto u, 
	\end{equation}
	where $u \in \mcH_{h,0}^1(\Omega)$ is the unique solution for differential equation  \eqref{e1}, then  $S$ is a bounded self-adjoint compact operator.		
\end{lemma}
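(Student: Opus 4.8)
The plan is to assemble the lemma from the well-posedness of Theorem~\ref{T.2.6} and the compact embedding of Theorem~\ref{Th.3.3}, with only a short symmetry computation being genuinely new. First I would note that $S$ is well defined and linear: by Theorem~\ref{T.2.6}, each $f\in L^2(\Omega)$ produces a unique $u=Sf\in\mcH_{h,0}^1(\Omega)$ characterised by $B[u,v]=(f,v)_{L^2(\Omega)}$ for all $v\in\mcH_{h,0}^1(\Omega)$, and composing with the inclusion $\mcH_{h,0}^1(\Omega)\hookrightarrow L^2(\Omega)$ gives a map $L^2(\Omega)\to L^2(\Omega)$; linearity follows from uniqueness together with the linearity of the weak formulation in $f$.

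For boundedness, I would test the weak formulation with $v=u$ and combine the coercivity estimate $B[u,u]\ge\tfrac12\|u\|_{\mcH_{h,0}^1(\Omega)}^2$ from the proof of Theorem~\ref{T.2.6} with the Poincar\'e inequality \eqref{202302092134}:
\begin{equation*}
\tfrac12\|u\|_{\mcH_{h,0}^1(\Omega)}^2\le B[u,u]=(f,u)_{L^2(\Omega)}\le\|f\|_{L^2(\Omega)}\|u\|_{L^2(\Omega)}\le C\|f\|_{L^2(\Omega)}\|u\|_{\mcH_{h,0}^1(\Omega)},
\end{equation*}
so that $\|Sf\|_{\mcH_{h,0}^1(\Omega)}\le C\|f\|_{L^2(\Omega)}$; in particular $\|Sf\|_{L^2(\Omega)}\le C\|f\|_{L^2(\Omega)}$, i.e. $S$ is a bounded operator on $L^2(\Omega)$, and in fact a bounded operator from $L^2(\Omega)$ into $\mcH_{h,0}^1(\Omega)$.

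For self-adjointness I would use that the bilinear form $B[\cdot,\cdot]$ of Definition~\ref{D.2.6} is symmetric. Given $f,g\in L^2(\Omega)$, put $u=Sf$ and $w=Sg$; taking $v=w$ in the weak equation for $u$ and $v=u$ in the one for $w$ gives
\begin{equation*}
(f,Sg)_{L^2(\Omega)}=(f,w)_{L^2(\Omega)}=B[u,w]=B[w,u]=(g,u)_{L^2(\Omega)}=(Sf,g)_{L^2(\Omega)},
\end{equation*}
hence $S^{*}=S$.

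Finally, compactness follows at once: $S$ maps $L^2(\Omega)$ boundedly into $\mcH_{h,0}^1(\Omega)$ by the estimate above, and by Theorem~\ref{Th.3.3} the embedding $\mcH_{h,0}^1(\Omega)\hookrightarrow L^2(\Omega)$ is compact, so $S:L^2(\Omega)\to L^2(\Omega)$ factors as a bounded operator followed by a compact one and is therefore compact. The hard part of this lemma is not in the lemma itself: it has all been done in establishing coercivity and boundedness of $B$ (Theorem~\ref{T.2.6}) and, above all, in proving the compact embedding $\mcH_{h,0}^1(\Omega)\hookrightarrow L^2(\Omega)$ (Theorem~\ref{Th.3.3}); what remains here is just the routine bookkeeping above, so I expect no real obstacle in writing it out.
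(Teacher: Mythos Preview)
Your proposal is correct and matches the paper's own proof essentially line for line: self-adjointness from the symmetry of $B[\cdot,\cdot]$, boundedness from testing the weak equation with $v=u$ together with coercivity, and compactness from factoring through the compact embedding of Theorem~\ref{Th.3.3}. The only cosmetic difference is that you spell out linearity and the Poincar\'e step explicitly, which the paper leaves implicit.
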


\begin{proof}
	For any $f,g\in L^2(\Omega)$, let $u$ and $v$ be the solutions corresponding to $f$ and $g$, respectively, we observe that
	\begin{equation*}
		(Sf,g)_{L^2(\Omega)}=(u,g)_{L^2(\Omega)}=B[u,v]=(f,v)_{L^2(\Omega)}=(f,Sg)_{L^2(\Omega)}.
	\end{equation*}
	And 
	\begin{equation*}
		B[u,u]=(f,u)_{L^2(\Omega)}\leq \|f\|_{L^2(\Omega)}\|u\|_{L^2(\Omega)}\leq \|f\|_{L^2(\Omega)}\|u\|_{\mcH_{h,0}^1(\Omega)}, 
	\end{equation*}
	this implies that  $\|u\|_{\mcH_{h,0}^1(\Omega)}\leq C \|f\|_{L^2(\Omega)}$, therefore
	\begin{equation*}
		\|Sf\|_{L^2(\Omega)}=\|u\|_{L^2(\Omega)}\leq \|u\|_{\mcH_{h,0}^1(\Omega)}\leq C\|f\|_{L^2(\Omega)}. 
	\end{equation*}	
	To make a connection with the result obtained in Theorem \ref{Th.3.3}, the Lemma \ref{Le.3.4} is proved.	
\end{proof}

\begin{remark}\label{R.3.5}
	It is not hard to find that $\dim \mcH_{h,0}^1(\Omega)=\infty$, so that $0\in \sigma(S), \sigma(S)-\{0\}=\sigma_p(S)-\{0\}$,  and  $\sigma(S)-\{0\}$ is a sequence tending to $0$, where $\sigma_p(S)$ is recorded as the point spectrum of $S$ and $\sigma(S)$ is considered as the spectrum of $S$. 
\end{remark}

\begin{lemma}\label{Le.3.6}
$(1)$ All the eigenvalues of $L$ is real and can be arranged in a monotone sequence on the basis of its (finite) multiplicity: 
	\begin{equation*}
	\sigma(L)=\{\lambda_k\}_{k=1}^{\infty}, \quad  0<\lambda_1 \le \lambda_2\le \lambda_3\le \cdots \le \lambda_k \cdots \rightarrow \infty, \quad k\rightarrow \infty.
	\end{equation*}

$(2)$ There exists an  orthonormal basis $\{w_k\}_{k=1}^{\infty}\subset L^2(\Omega)$, where $w_k\in\mcH_{h,0}^1(\Omega)$ is an eigenfunction with respect to $\lambda_k$
\begin{equation*}
		\begin{cases}
			Lw_k=\lambda_kw_k, & x\in \Omega, \\
		    u=0, & x\in\partial \Omega,
	\end{cases} 
\end{equation*}
for $k=1,2 \cdots. $

$(3)$ We have
\begin{equation}\label{Le.3.6.e3}
	\lambda_k=\inf_{\substack{E \subset\mcH_{h,0}^1(\Omega)\\  d(E)=k }} \sup_{\substack{u \in E \\ \|u\|_{L^2{(\Omega)}} = 1}} 	B[u,u].	
\end{equation}
In particular, assuming that we have already computed $u_1,u_2,\cdots, u_{k-1}$ the $(k-1)\raisebox{0mm}{-}th$ first eigenfunctions, we also have:
 $\lambda_k=\inf\{B[u,u]\mid u\in \mcH_{h,0}^1(\Omega), u\bot V_{k-1}, \|u\|_{L^2(\Omega)}=1\}$, where $V_{k-1}=span\{u_1,u_2\cdots,u_{k-1}\}$, the equality holds if and only if $u=w_k$.

$(4)$ The eigenvalue $\lambda_1$ is simple and the first eigenfunction  $u_1$ is positive on $\Omega$.
\end{lemma}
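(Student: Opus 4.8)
The plan is to transfer the classical spectral theory of compact self-adjoint operators to $L$ via the solution operator $S$ from Lemma \ref{Le.3.4}. For part $(1)$ and $(2)$: since $S$ is a bounded, self-adjoint, compact operator on $L^2(\Omega)$ (Lemma \ref{Le.3.4}) which is moreover injective (if $Sf=0$ then the weak formulation $B[0,v]=(f,v)=0$ for all $v$ forces $f=0$) and positive ($\langle Sf,f\rangle = B[u,u]\ge \tfrac12\|u\|^2_{\mcH^1_{h,0}(\Omega)}\ge 0$, with equality only when $u=0$, hence $f=0$), the Hilbert--Schmidt spectral theorem gives an orthonormal basis $\{w_k\}$ of $L^2(\Omega)$ of eigenfunctions $Sw_k=\mu_k w_k$ with $\mu_k>0$, $\mu_k\to 0$ (Remark \ref{R.3.5}). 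Setting $\lambda_k=\mu_k^{-1}$ and ordering decreasingly in $\mu_k$ yields $0<\lambda_1\le\lambda_2\le\cdots\to\infty$; and $Sw_k=\mu_k w_k$ means precisely, by Definition \ref{D.2.6}, that $w_k\in\mcH^1_{h,0}(\Omega)$ solves $B[w_k,v]=\lambda_k(w_k,v)_{L^2}$ for all $v$, i.e. $w_k$ is a weak eigenfunction of $L$ with eigenvalue $\lambda_k$. Conversely any eigenvalue of $L$ arises this way, so $\sigma(L)=\{\lambda_k\}$ and each multiplicity is finite because the $\mu_k$-eigenspaces are finite-dimensional.

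For part $(3)$, I would first record that $B[\cdot,\cdot]$ is an inner product on $\mcH^1_{h,0}(\Omega)$ equivalent to $(\cdot,\cdot)_{\mcH^1_{h,0}(\Omega)}$ (coercivity from Remark \ref{202306071004}, boundedness from the estimate in Theorem \ref{T.2.6}), and that $\{w_k\}$ is orthogonal in this inner product with $B[w_k,w_k]=\lambda_k$. Expanding $u=\sum_k c_k w_k$ with $\|u\|_{L^2}^2=\sum c_k^2=1$ gives $B[u,u]=\sum_k\lambda_k c_k^2$, from which the Courant--Fischer min-max identity \eqref{Le.3.6.e3} follows by the standard argument: testing the subspace $\mathrm{span}\{w_1,\dots,w_k\}$ gives $\le\lambda_k$, while any $k$-dimensional $E$ meets $\mathrm{span}\{w_k,w_{k+1},\dots\}$ nontrivially, forcing the sup to be $\ge\lambda_k$. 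The inductive characterization orthogonal to $V_{k-1}=\mathrm{span}\{u_1,\dots,u_{k-1}\}$ and the equality case $u=w_k$ are then immediate from the same expansion, using that the orthogonality constraint kills the first $k-1$ coefficients.

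For part $(4)$, I would argue that $\lambda_1$ is simple and the first eigenfunction may be taken positive. Let $u_1$ be a normalized first eigenfunction. The key observation is that $|u_1|\in\mcH^1_{h,0}(\Omega)$ with $\partial_{x_i}|u_1|=(\mathrm{sgn}\,u_1)\partial_{x_i}u_1$ a.e. (a standard fact for Sobolev-type functions, valid here since $u_1\in\mcH^1_{h,0}(\Omega)\subset W^{1,1}_0(\Omega)$ by Lemma \ref{202306061911} and the defining integrals only involve $|\nabla u_1|$ with the weight $h$), so $B[|u_1|,|u_1|]=B[u_1,u_1]=\lambda_1$ and $\||u_1|\|_{L^2}=1$; hence $|u_1|$ is also a minimizer, thus a weak solution of $Lu=\lambda_1 u$. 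Since $L$ has locally bounded coefficients and $|u_1|\in H^2_{\mathrm{loc}}(\Omega)$ by Theorem \ref{Th.k.2}, $|u_1|$ is a nonnegative weak solution of a locally uniformly elliptic equation $-\mathrm{div}(a\nabla w)+Vw=\lambda_1 w$ on any $\Omega'\subset\subset\Omega$ (the matrix $a=\mathrm{diag}(1,h(x_1))$ is uniformly elliptic away from $\{x_1=0\}$); the strong maximum principle / Harnack inequality then forces $|u_1|>0$ in the interior, so $u_1$ never vanishes and has one sign. If $\lambda_1$ had a second linearly independent eigenfunction $\tilde u_1$, it too could be taken positive by the same reasoning, contradicting $(u_1,\tilde u_1)_{L^2}=0$; hence $\lambda_1$ is simple.

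The main obstacle is part $(4)$: the operator $L$ degenerates on $\{x_1=0\}\subset\partial\Omega$, so the strong maximum principle is not available up to the full boundary. I expect to handle this by working on interior subdomains $\Omega'\subset\subset\Omega$ where $h(x_1)\ge\inf_{[\delta,1-\delta]}h>0$ makes the operator uniformly elliptic, applying the classical Harnack inequality there (legitimate since $u_1\in H^2_{\mathrm{loc}}$ and $V\in L^\infty$), and then exhausting $\Omega$ by such subdomains to conclude strict positivity throughout the open set $\Omega$ — which is all that is claimed. A secondary technical point is justifying $\partial_{x_i}|u_1|=(\mathrm{sgn}\,u_1)\partial_{x_i}u_1$ in the weighted space; this follows from approximating $|t|$ by smooth even convex functions and passing to the limit in the (finite) weighted Dirichlet integrals, exactly as in the unweighted case.
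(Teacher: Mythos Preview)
Your proposal is correct and follows essentially the same route as the paper: spectral theorem for the compact self-adjoint solution operator $S=L^{-1}$ gives parts (1)--(2); the standard Courant--Fischer argument via the $B$-orthogonal expansion in the eigenbasis gives part (3); and for part (4) the paper, like you, observes that $|u_1|$ is again a first eigenfunction by the variational characterization, then invokes the interior Harnack inequality on compact subdomains $\Omega'\subset\subset\Omega$ (where $h$ is bounded below and the operator is uniformly elliptic, citing Gilbarg--Trudinger) to force strict positivity, concluding simplicity from the impossibility of two orthogonal one-signed eigenfunctions. Your discussion of the degeneracy obstacle and the chain rule for $|u_1|$ in the weighted space is in fact more explicit than the paper's, which simply asserts that $|u|$ is an eigenfunction and that $u\in W^{1,2}(\Omega')$ suffices for Harnack.
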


\begin{proof}
	Let $S=L^{-1}$, $S$ is a self-adjoint compact linear operator owing to Lemma \ref{Le.3.4}.  Note
\begin{equation*}
	(Lf,f)=(u,f)=B[u,u]\ge 0 
\end{equation*}
for any given $f \in L^2(0,1)$.	 According to Theorem D7 (pp. 728) \cite{EvansPartial}, we know that the eigenvalues of $S$ is real and positive, and there exists a countable orthonormal basis of $L^2(\Omega)$ consisting of eigenvectors of $S$. Moreover, for $\eta \neq 0$, $Sw=\eta w$ if and only if $Lw=\lambda w$ for $\lambda=\frac{1}{\eta}$. Then $(1)(2)$ is proved.

By $(2)$, we have 
\begin{equation}\label{Le.3.6.1}
	\begin{cases}
		B[w_k,w_k]=\lambda_k(w_k,w_k)=\lambda_k,\\
		B[w_k,w_l]=\lambda_k(w_k,w_l)=0,k,l=1,2\cdots ,k\neq l.
	\end{cases}	
\end{equation}	
Since $\{w_k\}_{k=1}^\infty$ is the orthogonal basis of $L^2(\Omega)$, if $u\in \mcH_{h,0}^1(\Omega)$ and $\|u\|_{L^2(\Omega)}=1$, then
\begin{equation}\label{4.5.2}
	u=\sum_{k=1}^{\infty}d_kw_k, \quad  d_k=(u,w_k), \quad \sum_{k=1}^{\infty}d_k^2=\|u\|_{L^2(\Omega)}^2=1,
\end{equation}
the series converging in $L^2(\Omega)$. By \eqref{Le.3.6.1}, $\frac{w_k}{\sqrt{\lambda_k}}$ is an orthonormal subset of $\mcH_{h,0}^1(\Omega)$, endowed with the new inner product $B[\cdot,\cdot]$. Actually, due to $V(x)\in S$,
\begin{equation}\label{4.5.3}
	c_1\|u\|_{\mcH_{h,0}^1(\Omega)}^2\le B[u,u]\le  c_2\|u\|_{\mcH_{h,0}^1(\Omega)}^2.
\end{equation}
Besides, if $B[w_k,u]=\lambda_k(w_k,u)=0$ for $k=1,2\cdots $, we deduce that $u \equiv 0$ due to $\lambda_k>0$ and $\{w_k\}_{k=1}^{\infty}$ is the orthonormal basis of $L^2(\Omega)$. Hence $u=\sum\limits_{k=1}^{\infty}\mu_k \frac{w_k}{\sqrt{\lambda_k}}$ for $\mu_k=B[u,\frac{w_k}{\sqrt{\lambda_k}}]$, the series converging in $\mcH_{h,0}^1(\Omega)$. Combining with \eqref{4.5.2}, we obtain that $\mu_k={d_k}{\sqrt{\lambda_k}}$ and $u=\sum\limits_{k=1}^{\infty}d_kw_k$ is convergent in $\mcH_{h,0}^1(\Omega)$.  

Let $E$ denotes any $k\raisebox{0mm}{-}$dimensional subspace of $\mcH_{h,0}^1(\Omega)$, and let $U={\rm span}\{w_k,w_{k+1}, \cdots\}$, clearly, $E \cap U \neq \emptyset$. For any $u \in E \cap U$ with $\|u\|_{L^2(\Omega)}=1$, we have $u=\sum\limits_{i=k}^{\infty}d_iw_i$ with $\sum\limits_{i=k}^{\infty}d_i^2=1$, furthermore, 
\begin{equation*}
	B[u,u]=\sum\limits_{i=k}^{\infty}d_i^2B(w_i,w_i)=\sum\limits_{i=k}^{\infty}d_i^2\lambda_i \ge \lambda_k,
\end{equation*}	
in other words, $\sup\limits_{\substack{u \in E \\ \|u\|_{L^2{(\Omega)}} = 1}} B[u,u] \ge \lambda_k$ is established for any subspace $E \subset \mcH_{h,0}^1(\Omega)$, i.e.
\begin{equation}\label{Le.3.6.2}
	\inf_{\substack{E \subset\mcH_{h,0}^1(\Omega)\\  d(E)=k }} \sup_{\substack{u \in E \\ \|u\|_{L^2{(\Omega)}} = 1}} B[u,u] \ge \lambda_k.
\end{equation}	
On the other side, consider  $V={\rm span}\{w_1,w_2,\cdots,w_k\}$, we find that $V\subset E$. For any $u \in V$ with $\|u\|_{L^2(\Omega)}=1$, we obtain 
\begin{equation*}
	B[u,u]=\sum\limits_{i=1}^{k}d_i^2B[w_i,w_i]=\sum\limits_{i=1}^{k}d_i^2\lambda_i \le \lambda_k,
\end{equation*}	
that is, $\sup\limits_{\substack{u \in V \\ \|u\|_{L^2{(\Omega)}} = 1}} B[u,u] \le \lambda_k$, given that $V$ is one of the subspaces of $E$, then
\begin{equation}\label{Le.3.6.3}
\inf_{\substack{E \subset\mcH_{h,0}^1(\Omega)\\  d(E)=k }} \sup_{\substack{u \in E \\ \|u\|_{L^2{(\Omega)}} = 1}} 	B[u,u] \le \lambda_k.
\end{equation}	
By employing the equalities \eqref{Le.3.6.2} and \eqref{Le.3.6.3}, then \eqref{Le.3.6.e3} is proved.

Suppose we have got $u_1,u_2,\cdots, u_{k-1}$, for any $u\in \mcH_{h,0}^1(\Omega) $ and $u\bot V_{k-1}$  with $\|u\|_{L^2(\Omega)}=1$, where $V_{k-1}={\rm span}\{u_1,u_2\cdots,u_{k-1}\}$, then $u=\sum\limits_{i=1}^{\infty}(u,w_i)w_i=\sum\limits_{i=k}^{\infty}(u,w_i)w_i=\sum\limits_{i=k}^{\infty}d_iw_i$ and $\sum\limits_{i=k}^{\infty}d_i^2=1$. Moreover, $B[u,u]=\sum\limits_{i=k}^{\infty} B[w_i,w_i]= \sum\limits_{i=k}^{\infty} d_i^2\lambda_i \ge \lambda_k $, which implies that 
 $\inf\limits_{\substack{u \in \mcH_{h,0}^1(\Omega),u\bot V_{k-1} \\ \|u\|_{L^2{(\Omega)}} = 1}} B[u,u] \ge \lambda_k$. By \eqref{Le.3.6.1}, we see that $B[w_k,w_k]=\lambda_k$, then the result $(3)$ is confirmed.

 For the weak solution  $u \in \mcH_{h,0}^1(\Omega)$, actually $u \in W^{1,2}(\Omega')$  and the elliptic operator $L$ are uniform for any $\Omega' \subset\subset \Omega$ by the  definition of $h(x_1)$, according to Chapter 8.6 and Chapter 8.8 on \cite{gilbarg2015elliptic}, we can get the Harnack inequality   $\sup\limits_{x\in \Omega'}u(x)\leq C\inf\limits_{x\in \Omega'}u(x)$. 	According to $(3)$, if $u$ is the eigenfunction for $\lambda_1$, so is $|u|$. Since $|u|$ is non-negative in $\Omega$, we further obtain that $|u|$ is a positive eigenfunction taking advantage of Harnack inequality. This shows that the eigenfunctions of $\lambda_1$ are either positive or negative and thereby it is impossible that two of them are orthogonal, therefore $\lambda_1$ is simple. 
\end{proof}

 Next, we commence by investigating  the existence of the minimum fundamental gap when $V(x)$ is limited to the set $S$, and then further characterize the optimal function and express its behavior.
 
\begin{theorem} \label{T.1}
The fundamental gap $\Gamma (V)$ attains its minimum in the classes of $S$ by $V^*$.
\end{theorem}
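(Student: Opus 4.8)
The plan is to run the direct method, using the min--max formula \eqref{Le.3.6.e3} of Lemma~\ref{Le.3.6} as the main tool. Write $B_V[u,v]=\int_\Omega(\partial_{x_1}u)(\partial_{x_1}v)+h(x_1)(\partial_{x_2}u)(\partial_{x_2}v)+Vuv\,dx$ to exhibit the dependence on $V$, and put $Q(u)=\int_\Omega|\partial_{x_1}u|^2+h(x_1)|\partial_{x_2}u|^2\,dx$, so that $Q(u)=\|u\|_{\mcH_{h,0}^1(\Omega)}^2$ by Remark~\ref{202306071004}. Since $\lambda_1(V)\le\lambda_2(V)$, the number $\gamma_*:=\inf_{V\in S}\Gamma(V)$ is finite and nonnegative; pick a minimizing sequence $\{V_n\}_{n\in\mb{N}}\subset S$. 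Because $S$ is bounded, convex and weak-$*$ closed in $L^\infty(\Omega)=(L^1(\Omega))^*$ (the pointwise bounds $m\le V\le M$ survive weak-$*$ limits after testing against nonnegative $L^1$ functions), Banach--Alaoglu lets us pass to a subsequence with $V_n\overset{*}{\rightharpoonup}V^*$ and $V^*\in S$. I will prove that $\lambda_k(V_n)\to\lambda_k(V^*)$ for every fixed $k$; taking $k=1$ and $k=2$ then gives $\Gamma(V_n)\to\Gamma(V^*)$, so $\Gamma(V^*)=\gamma_*$ and $V^*$ is the desired minimizer. Note one cannot argue by semicontinuity of $\Gamma$ directly: a difference of eigenvalues is in general neither upper nor lower semicontinuous, so full convergence of both $\lambda_1$ and $\lambda_2$ is genuinely needed.

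\medskip
Monotonicity of the form ($V_1\le V_2\Rightarrow B_{V_1}[u,u]\le B_{V_2}[u,u]$, as $u^2\ge0$) together with \eqref{Le.3.6.e3} gives the uniform bound $\lambda_k(m)\le\lambda_k(V)\le\lambda_k(M)$ for all $V\in S$. For the upper estimate $\limsup_n\lambda_k(V_n)\le\lambda_k(V^*)$, take the trial space $E_0=\mathrm{span}\{w_1^*,\dots,w_k^*\}$ spanned by the first $k$ eigenfunctions of $L$ with potential $V^*$, so that $\sup_{u\in E_0,\,\|u\|_{L^2}=1}B_{V^*}[u,u]=\lambda_k(V^*)$. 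By \eqref{Le.3.6.e3}, $\lambda_k(V_n)\le\sup_{u\in E_0,\,\|u\|_{L^2}=1}B_{V_n}[u,u]$; on the (compact, finite-dimensional) sphere $K_0=\{u\in E_0:\|u\|_{L^2}=1\}$ the functionals $u\mapsto\int_\Omega V_nu^2\,dx$ are equi-Lipschitz with constant $2M$ (since $|\int_\Omega V_n(u^2-v^2)\,dx|\le 2M\|u-v\|_{L^2}$ on $K_0$) and converge pointwise to $u\mapsto\int_\Omega V^*u^2\,dx$ by $V_n\overset{*}{\rightharpoonup}V^*$, hence converge uniformly on $K_0$; therefore $\sup_{K_0}B_{V_n}\to\sup_{K_0}B_{V^*}=\lambda_k(V^*)$, as claimed.

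\medskip
The lower estimate is the crux, and it is where the compact embedding Theorem~\ref{Th.3.3} is used. Let $w_1^n,\dots,w_k^n$ be $L^2$-orthonormal eigenfunctions of $L$ with potential $V_n$ associated with $\lambda_1(V_n),\dots,\lambda_k(V_n)$. From $c_1\|w_j^n\|_{\mcH_{h,0}^1(\Omega)}^2\le B_{V_n}[w_j^n,w_j^n]=\lambda_j(V_n)\le\lambda_k(M)$ (cf.\ \eqref{4.5.3}), the sequences $\{w_j^n\}_n$ are bounded in $\mcH_{h,0}^1(\Omega)$, so after a further subsequence $w_j^n\rightharpoonup w_j$ in $\mcH_{h,0}^1(\Omega)$ and, by Theorem~\ref{Th.3.3}, $w_j^n\to w_j$ in $L^2(\Omega)$; strong $L^2$-convergence preserves orthonormality, so $E^*:=\mathrm{span}\{w_1,\dots,w_k\}$ is $k$-dimensional. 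Fix $u=\sum_{j=1}^kc_jw_j$ with $\sum_jc_j^2=1$ and set $u_n=\sum_{j=1}^kc_jw_j^n$; by the $B_{V_n}$-orthogonality \eqref{Le.3.6.1}, $B_{V_n}[u_n,u_n]=\sum_jc_j^2\lambda_j(V_n)\le\lambda_k(V_n)$, while $u_n\rightharpoonup u$ in $\mcH_{h,0}^1(\Omega)$ and $u_n\to u$ in $L^2(\Omega)$. Then $Q(u)\le\liminf_n Q(u_n)$ by weak lower semicontinuity of the norm, and
\begin{equation*}
\Big|\int_\Omega V_nu_n^2\,dx-\int_\Omega V^*u^2\,dx\Big|\le M\|u_n^2-u^2\|_{L^1(\Omega)}+\Big|\int_\Omega V_nu^2\,dx-\int_\Omega V^*u^2\,dx\Big|\longrightarrow0,
\end{equation*}
the first term because $u_n\to u$ in $L^2(\Omega)$ and the second because $u^2\in L^1(\Omega)$ and $V_n\overset{*}{\rightharpoonup}V^*$.

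\medskip
Consequently
\begin{equation*}
B_{V^*}[u,u]=Q(u)+\int_\Omega V^*u^2\,dx\le\liminf_n Q(u_n)+\lim_n\int_\Omega V_nu_n^2\,dx=\liminf_n B_{V_n}[u_n,u_n]\le\liminf_n\lambda_k(V_n).
\end{equation*}
Taking the supremum over all such $u$ and invoking \eqref{Le.3.6.e3} with the trial space $E^*$ gives $\lambda_k(V^*)\le\sup_{u\in E^*,\,\|u\|_{L^2}=1}B_{V^*}[u,u]\le\liminf_n\lambda_k(V_n)$; combined with the upper estimate this yields $\lambda_k(V_n)\to\lambda_k(V^*)$, and applying it to $k=1,2$ finishes the proof. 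The one genuinely delicate point is this lower estimate: one must pass to the limit in the potential term $\int_\Omega V_nu_n^2\,dx$ with \emph{both} $V_n$ and the eigenfunctions $u_n$ varying, which forces the use of the compact embedding $\mcH_{h,0}^1(\Omega)\hookrightarrow L^2(\Omega)$ to upgrade weak $\mcH_{h,0}^1$-convergence of eigenfunctions to strong $L^2$-convergence; the remaining steps are routine.
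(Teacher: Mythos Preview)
Your proof is correct, and the delicate point you flag---passing to the limit in $\int_\Omega V_nu_n^2\,dx$ with both factors varying, handled by the compact embedding---is indeed the heart of the matter. One small clarification worth making explicit: the further subsequence extracted for the eigenfunctions in the lower estimate should first be taken along a subsequence realizing $\liminf_n\lambda_k(V_n)$; this is routine but makes the chain of inequalities airtight.

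Your route, however, differs genuinely from the paper's. Both arguments begin identically (minimizing sequence, weak-$*$ compactness of $S$, boundedness of eigenvalues and eigenfunctions, compact embedding to upgrade weak $\mcH_{h,0}^1$-convergence to strong $L^2$-convergence), but they diverge in how the limit eigenvalues are identified. The paper passes to the limit directly in the weak formulation of the eigenvalue equation, obtaining eigenpairs $(\lambda_j^*,u_j^*)$ for the limit potential $V^*$; it then invokes the \emph{qualitative} structure of the bottom of the spectrum from Lemma~\ref{Le.3.6}(4): since $u_1^k>0$ converges a.e.\ to $u_1^*\ge0$, the limit $u_1^*$ must be the ground state, so $\lambda_1^*=\lambda_1(V^*)$; and since $u_2^*\perp u_1^*$ forces a sign change, $u_2^*$ cannot be the ground state, so $\lambda_2^*\ge\lambda_2(V^*)$. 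This yields only lower semicontinuity of $\Gamma$ along the sequence, which suffices. You instead establish the stronger statement $\lambda_k(V_n)\to\lambda_k(V^*)$ for every $k$, via a two-sided min--max argument: the upper bound tests against the eigenspace of $V^*$ (using uniform convergence of the potential term on a finite-dimensional sphere), and the lower bound tests against the limiting span of the eigenfunctions of $V_n$ (using weak lower semicontinuity of $Q$). Your argument makes no appeal to positivity or simplicity of the ground state and would carry over verbatim to higher gaps $\lambda_{k+1}-\lambda_k$; the paper's argument is shorter for the specific case $k=1,2$ but is tied to those special properties.
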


\begin{proof}
Let $\{V^k\}_{k\in \mathbb{N} } \in S$ be the minimization sequence of $\Gamma(V)$, i.e.
\begin{equation*}
	\Gamma(V^k)\downarrow \inf\limits_{V\in S } \Gamma(V). 
\end{equation*}	
Since  $S$ is a bounded closed convex set, there is a sequence $\{V^k\}_{k\in\mathbb{N}}$ and $V^* \in S$,  such that 
	\begin{equation}\label{Se.4.1}
		V^k\rightarrow V^* \mbox{ weakly star in } L^\infty(\Omega). 
	\end{equation}	
	
Let $\{(\lambda_j^k,u_j^k)\}_{k\in\mathbb{N}}$ be a sequence of eigenpairs of fixed index of the degenerate elliptic equation \eqref{Se3.1} related to $V^k$, where $u_j^k$ has unit $L^2$ norm, $j=1,2$. We may assume $h(x_1)\le H$ for $x_1\in (0,1)$ a.e. so that
\begin{equation}
	\begin{split}
\lambda_j^k=&\inf_{\substack{E \subset\mcH_{h,0}^1(\Omega)\\  d(E)=j }} \sup_{\substack{u \in E \\ \|u\|_{L^2{(\Omega)}} = 1}} {\int_\Omega |\partial_{x_1}u|^2+|\sqrt{h(x_1)}\partial_{x_2}u|^2+V^k|u|^2dx}\\
&\le \inf_{\substack{E \subset\mcH_{h,0}^1(\Omega)\\  d(E)=j }} \sup_{\substack{u \in E \\ \|u\|_{L^2{(\Omega)}} = 1}}{\int_\Omega |\partial_{x_1}u|^2+H|\partial_{x_2}u|^2+M|u|^2dx}\\
&={\int_\Omega  |\partial_{x_1}\widetilde u_j|^2+H|\partial_{x_2}\widetilde u_j|^2+M|\widetilde u_j|^2dx}\le C,
	\end{split}		
\end{equation}
where $\|\widetilde u_j\|_{L^2(\Omega)}$ is the corresponding  normalized eigenfunction of \eqref{Se3.1} involving with $h(x_1) \equiv H$ and $V(x) \equiv M$ on $\Omega$, $j=1,2$.  In fact, 
	\begin{equation*}
	\frac{1}{2}\|u_j^k\|_{\mcH_{h,0}^1(\Omega)}^2	\le{\int_\Omega |\partial_{x_1}u_j^k|^2+|\sqrt{h(x_1)}\partial_{x_2}u_j^k|^2+V|u_j^k|^2 dx} =\lambda_j^k \le C, 
	\end{equation*}
	therefore
	\begin{equation}\label{Se.4.2}
		\|u_j^k\|_{\mcH_{h,0}^1(\Omega)} \le C. 
	\end{equation}
 So, there exists a subsequence of $\{u_j^k\}_{k\in\mathbb{N}}$ by \eqref{Se.4.2},  still preserved this index, such that
	\begin{equation}\label{Se.4.3}
		u_j^k\rightarrow u_j^* \mbox{ weakly in } \mcH_{h,0}^1(\Omega), \ j=1,2,
	\end{equation}
	and by Theorem \ref{Th.3.3}
	\begin{equation}\label{Se.4.4}
		u_j^k\rightarrow u_j^* \mbox{ strongly in } L^2(\Omega), \ j=1,2.
	\end{equation}
	Now we can extract a further subsequence such that 
	\begin{equation}\label{Se.4.5}
		\lambda_j^k\rightarrow \lambda_j^*, \ j=1,2,
	\end{equation}
for each $j$ we know for all  $v \in \mcH_{h,0}^1(\Omega)$                  
	\begin{equation}\label{Se.4.6}
	{\int_\Omega(\partial_{x_1}u_j^k)(\partial_{x_1}v) +(\sqrt{h(x_1)}\partial_{x_2}u_j^k)(\sqrt{h(x_1)}\partial_{x_2}v)+V^ku_j^k vdx} =\lambda_j^k \int_\Omega u_j^k vdx, 
	\end{equation}
choosing $k$ sufficiently large, according to \eqref{Se.4.1} \eqref{Se.4.3}  and \eqref{Se.4.4} \eqref{Se.4.5},
               
		\begin{equation}\label{Se.4.7}
			{\int_\Omega(\partial_{x_1}u_j^*)(\partial_{x_1}v) +(\sqrt{h(x_1)}\partial_{x_2}u_j^*)(\sqrt{h(x_1)}\partial_{x_2}v)+V^*u_j^* vdx} =\lambda_j^* \int_\Omega u_j^* vdx. 
		\end{equation}

 This indicates that $\lambda_j^k$ converges to an element of the spectrum of the problem \eqref{Se3.1} given by $V^*$. In particular, we may extract a subsequence of  $\{u_1^k\}_{k\in \mathbb{N}}\subset L^2(\Omega)$ such that $u_1^k$ is converge to $u_1^*$ a.e. by \eqref{Se.4.4}, based on the non-negativity of $u_1^k$ (see Lemma \ref{Le.3.6} (4)), then $u_1^*$ must be the first eigenfunction and $\lambda_1^*=\lambda_1(V^*)$. Considering that $u_2^*$ and $u_1^*$ are orthogonal on $\Omega$, we know that $u_2^*$ must change the sign on $\Omega$, which means that  $u_2^*$ not to be the first eigenfunction of $\lambda_1(V^*)$ by Lemma \ref{Le.3.6}. Therefore, we have $\lambda_2^*\ge \lambda_{2}(V^*)$, furthermore,
 \begin{equation}
 	\Gamma(V^k) \rightarrow \lambda_2^*-\lambda_1^*\ge \Gamma(V^*), k \rightarrow \infty,
 \end{equation}
that is, $\Gamma(V^*) \le \inf\limits_{V\in S } \Gamma(V)$. Meanwhile,  $\Gamma(V^*) \ge \inf\limits_{V\in S } \Gamma(V)$, then  we conclude that  the minimum value  of $\Gamma(V)$ can be reached.
\end{proof}

\begin{definition}\label{202306091636}
	A real-valued, measurable and bounded  function $P(x)$ on $\Omega$ is called an {\it admissible perturbation} of $V(x)$ provided $V(x)+tP(x) \in S$ for any sufficiently small $t\in (-\epsilon,\epsilon)$. The function $P(x)$ is called a {\it left-admissible} ({\it right-admissible}) perturbation provided $V(x)+tP(x) \in S$ for non-positive (non-negative) $t$.
\end{definition}

Consider the  operator $L$ as in \eqref{e3}  on the Hilbert space $\mcH_{h,0}^1(\Omega)$ and a family of operators $L_t$ defined by replacing $V$ by $V+tP(x)$ in L, $t\in (-\epsilon,\epsilon)$ for small $\epsilon>0$. Let $\lambda_0$ be a discrete eigenvalue of $L_0$, then there exist families $\lambda_l(t)\ (l=1,2,\cdots,r)$ of discrete eigenvalues of $L_t$ such that $\lambda_l(0)=\lambda_0$, the total multiplicity of the eigenvalues $\lambda_l(t)\ (l=1,2,\cdots,r)$ is equal to the multiplicity of $\lambda_0$, and the eigenvalues are analytic in $t$. Especially, if the discrete eigenvalue $\lambda_j(L_0)$ is simple \cite{hislop2012introduction,pankov2006introduction,bookPerturbation} , we have 
\begin{equation}\label{Le.4.2}
	\frac{d\lambda_{j}(V+tP(x))}{dt} \Big|_{t=0}=\int_\Omega P(x) u_j^2 dx, 
\end{equation}
where $u_j$ is the normalized eigenfunction with respect to $L_0$. If the discrete eigenvalue $\lambda_j(L_0)$ is degenerate (the multiplicity is $r$) \cite{hislop2012introduction,pankov2006introduction,bookPerturbation,RichardCourant,rellich1969perturbation} , then it splits into several eigenvalue branches $\lambda_{j,m}$ under a perturbation, these are the eigenvalues of $L_t$ which converge to $\lambda_j$ as $t \rightarrow 0$, and each branch is an analytic function for small $t$, but those functions do not ordinarily correspond to the ordering of eigenvalues given by the min-max principle (Lemma \ref{Le.3.6} (3)). For example, the lowest one for $t<0$ will be the highest for $t>0$. Moreover, we 
\begin{equation}\label{Le.4.2022}
	\frac{d\lambda_{j,m}(V+tP(x))}{dt} \Big|_{t=0}=\int_\Omega P(x) u_{j,m}^2 dx, 
\end{equation} 
where the orthonormal eigenfunctions $u_{j,m}$  are chosen so that
\begin{equation*}
	\int_\Omega u_{j,i} P(x) u_{j,m} =0, \ i\neq m.	
\end{equation*}

\begin{theorem}\label{Th.4.4}
If a function $V^*$	minimizes $\Gamma(V)$ in the class $S$ for the eigenvalue problem \eqref{Se3.1}, then $\lambda_2(V^*)$ is non-degenerate  and there exists a subset $\omega \subset \Omega$ such that 
\begin{equation*}
V^*=m\chi_\omega+M\chi_{\omega^c},	
\end{equation*}
moreover, 
\begin{alignat*}{2}
	|u_2^*|^2-|u_1^*|^2 
	&\ge 0 \   && \mbox{\rm on} \  \omega,\\
	|u_2^*|^2-|u_1^*|^2
	&\le 0 \  \  &&  \mbox{\rm  on} \ \omega^c ,				
\end{alignat*}
where $u_1^*$ and $u_2^*$ represent the first and second normalized eigenfunctions with respect to $V^*$, respectively.  
\end{theorem}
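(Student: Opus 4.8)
\emph{Proof proposal.} The plan is a first-variation (bang-bang) argument resting on the perturbation formulas \eqref{Le.4.2}--\eqref{Le.4.2022}, the simplicity and positivity of $u_1^*$ (Lemma \ref{Le.3.6}(4)), and the interior regularity $u_j^*\in H_{\rm loc}^2(\Omega)$ (Theorem \ref{Th.k.2}). Fix an $L^2$-orthonormal basis $\{u_{2,i}\}_{i=1}^r$ of the eigenspace of $\lambda_2(V^*)$ (so $r$ is its multiplicity), and for a perturbation direction $P$ put $\psi_c=\sum_{i=1}^r c_iu_{2,i}$ and $\mu_{\min}(P)=\min_{|c|=1}\int_\Omega P|\psi_c|^2\,dx$, $\mu_{\max}(P)=\max_{|c|=1}\int_\Omega P|\psi_c|^2\,dx$; these are the least and greatest eigenvalues of $\big(\int_\Omega P\,u_{2,i}u_{2,j}\,dx\big)_{ij}$, hence the derivatives at $t=0$ of the branches of $\lambda_2(V^*+tP)$. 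Since $\lambda_1$ is simple near $V^*$, $\frac{d}{dt}\lambda_1(V^*+tP)\big|_{t=0}=\int_\Omega P|u_1^*|^2\,dx$, while because the branch ordering reverses at $t=0$ the one-sided derivatives are $\frac{d^+}{dt}\Gamma=\mu_{\min}(P)-\int_\Omega P|u_1^*|^2$ and $\frac{d^-}{dt}\Gamma=\mu_{\max}(P)-\int_\Omega P|u_1^*|^2$. Consequently, minimality of $V^*$ forces: (i) $\mu_{\min}(P)\ge\int_\Omega P|u_1^*|^2$ for every right-admissible $P$; (ii) $\mu_{\max}(P)\le\int_\Omega P|u_1^*|^2$ for every left-admissible $P$; (iii) $\int_\Omega P(|\psi_c|^2-|u_1^*|^2)\,dx=0$ for every admissible $P$ and every unit vector $c$.

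First I would prove that $\lambda_2(V^*)$ is simple, arguing by contradiction: assume $r\ge2$. Testing (i) with $P=\chi_E$, $E\subset\{V^*=m\}$ (which is right-admissible), gives $\int_E(|\psi_c|^2-|u_1^*|^2)\ge0$ for all such $E$ and all unit $c$, hence $|\psi_c|^2\ge|u_1^*|^2$ a.e.\ on $\{V^*=m\}$ for every unit $c$ (reduce to a countable dense family of $c$'s and use continuity of $c\mapsto\psi_c(x)$). At a.e.\ point $x$ one may choose, since $r\ge2$, a unit $c\perp(u_{2,1}(x),\dots,u_{2,r}(x))$, so $\psi_c(x)=0$ and therefore $u_1^*(x)=0$; as $u_1^*>0$ this forces $|\{V^*=m\}|=0$. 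Similarly (iii), applied on the exhausting sets $\{m+\tfrac1n<V^*<M-\tfrac1n\}$ where $\pm\chi_E$ are genuinely admissible, gives $|\psi_c|^2=|u_1^*|^2$ a.e.\ on $\{m<V^*<M\}$ for all $c$, and the same choice of $c$ forces $|\{m<V^*<M\}|=0$. Thus $V^*=M$ a.e.; but then $P=-\chi_E$, $E\subset\Omega$, is right-admissible and (i) yields $|\psi_c|^2\le|u_1^*|^2$ a.e.\ for all unit $c$; choosing $c$ parallel to $(u_{2,i}(x))_i$ gives $\sum_iu_{2,i}^2\le|u_1^*|^2$ a.e., and integrating yields $r\le1$, a contradiction. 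Hence $r=1$.

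With $\lambda_2(V^*)$ simple, $\frac{d}{dt}\Gamma(V^*+tP)\big|_{t=0}=\int_\Omega P(|u_2^*|^2-|u_1^*|^2)\,dx$ for every admissible $P$. On the (exhausted) set $A:=\{m<V^*<M\}$ the perturbations $P=\pm\chi_E$ are admissible, so $\int_E(|u_2^*|^2-|u_1^*|^2)=0$ for all such $E$, i.e.\ $|u_2^*|^2=|u_1^*|^2$ a.e.\ on $A$. If $|A|>0$ then, a.e.\ on $A$, $u_2^*=u_1^*$ or $u_2^*=-u_1^*$; picking $\Omega'\subset\subset\Omega$ on which one of these pieces has positive measure, and using $u_1^*,u_2^*\in H^2(\Omega')$, the function $w:=u_2^*\mp u_1^*$ together with $\partial_{x_1}w,\partial_{x_2}w,\partial_{x_1x_1}w,\partial_{x_2x_2}w$ vanishes a.e.\ on that piece; substituting into $-(\partial_{x_1x_1}w+h(x_1)\partial_{x_2x_2}w)+V^*w=\lambda_2 w\pm(\lambda_2-\lambda_1)u_1^*$ gives $(\lambda_2-\lambda_1)u_1^*=0$ there, impossible since $\lambda_2>\lambda_1$ and $u_1^*>0$. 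Hence $|A|=0$, and with $\omega:=\{V^*=m\}$ we obtain $V^*=m\chi_\omega+M\chi_{\omega^c}$.

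It remains to read off the sign conditions. For $E\subset\omega$, $P=\chi_E$ is right-admissible, so $\frac{d^+}{dt}\Gamma\ge0$ becomes $\int_E(|u_2^*|^2-|u_1^*|^2)\ge0$ for all such $E$, whence $|u_2^*|^2-|u_1^*|^2\ge0$ a.e.\ on $\omega$; for $E\subset\omega^c$, $P=-\chi_E$ is right-admissible and yields $\int_E(|u_2^*|^2-|u_1^*|^2)\le0$, whence $|u_2^*|^2-|u_1^*|^2\le0$ a.e.\ on $\omega^c$. I expect the main obstacle to be the simplicity step: correctly pinning down the one-sided derivatives of $\Gamma$ in the degenerate case (the asymmetry $\mu_{\min}$ versus $\mu_{\max}$ coming from the reversal of branch ordering) and then exploiting the freedom to rotate within the $\lambda_2(V^*)$-eigenspace; by comparison the unique-continuation argument for the bang-bang structure is routine once the $H^2_{\rm loc}$ regularity of Theorem \ref{Th.k.2} is in hand.
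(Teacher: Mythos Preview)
Your proof is correct; the bang--bang step (ruling out $\{m<V^*<M\}$ via $H^2_{\rm loc}$ and the vanishing of $D^2w$ on $\{w=0\}$) and the derivation of the sign conditions on $\omega,\omega^c$ match the paper's Steps~1--3. The genuine difference is in how you prove simplicity of $\lambda_2(V^*)$. The paper (Step~4) first re-establishes the bang--bang structure and the inequalities $|u^*|^2\ge|u_1^*|^2$ on $\omega$ for \emph{every} normalized $u^*$ in the $\lambda_2$-eigenspace, then uses interior continuity of eigenfunctions (from $H^2_{\rm loc}$ and Sobolev embedding) to locate a single point $x_0\in\omega$ where two orthogonal eigenfunctions are nonzero and builds a combination vanishing at $x_0$, contradicting $|u^*(x_0)|\ge u_1^*(x_0)>0$. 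You instead run this linear-algebra trick pointwise a.e.: if $r\ge2$, the functional $c\mapsto\psi_c(x)$ has nontrivial kernel at every $x$, so from (i) you force $u_1^*=0$ a.e.\ on $\{V^*=m\}$, hence $|\{V^*=m\}|=0$; after likewise killing $\{m<V^*<M\}$ you reach $V^*\equiv M$ and close with the integrated bound $r\le1$. Your route is more elementary in that it avoids invoking continuity of eigenfunctions (and the mild awkwardness of choosing an ``interior point'' in the merely measurable set $\omega$), needing only Lebesgue differentiation and a countable-density pass over $c$; the paper's route, on the other hand, keeps the structure of $\omega$ in view throughout and is closer to the classical treatments of Karaa and Ashbaugh--Harrell that the paper follows.
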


\begin{proof}
 {\it Step 1}.    Suppose $\lambda_2(V^*)$ is simple, we claim that $T:=\{x\in \Omega\mid m<  V^*(x) <M\}$ is a subset with zero measure. 
 
By contradictory, we suppose $|T|>0$. Let $T_k=\{x\in \Omega\mid m+\frac{1}{k}<V^*(x)<M-\frac{1}{k}\}$ for all $k\in \mathbb{N}$. Since $T=\bigcup\limits_{k = 1}^\infty T_{k}$ and $T_k\subset T_{k+1}$ for all $k\in\mb{N}$, there exists $k_0\in\mb{N}$ such that  $|T_{k_0}|>0$. Taking  $0<\epsilon<\frac{1}{k_0}$, for any Lebesgue point $x_0 \in T_{k_0}$ and any measurable sequence of subsets $R_{k,j} \subset T_{k_0}$ containing $x_0$,
we see that $P(x)=\chi_{R_{k_0,j}}$ is  an admissible perturbation on sufficiently small $t\in (-\epsilon,\epsilon)$.  By  \eqref{Le.4.2}, 
  \begin{equation}\label{Se.4.9}
  	\frac{d\Gamma(V+tP(x))}{dt}\Big|_{t=0}=\int_{R_{k_0,j}} |u_2^*|^2-|u_1^*|^2  dx=0,
  \end{equation}
dividing the $|R_{k_0,j}|$ and letting $R_{k_0,j}$ shrink nice to $x_0$ as $j \to \infty$,  from the Lebesgue Differential Theorem we have  $|u_2^*|^2-|u_1^*|^2=0$ on $T_{k_0}$ and hence on $T$.

{\it Step 2}. Next, we show that $|u_2^*|^2=|u_1^*|^2$ can only appear on the set with zero measure. Otherwise, based on $(4)$ of Lemma \ref{Le.3.6}, we have $T=T^{+} \cup T^{-}$, $T^+=\{x\in T\mid u_2^*(x)>0\}, T^-=\{x\in T\mid u_2^*(x)<0\}$,  without loss of generality, we assume the set $T^+$ is a set of positive measure. Then we have $u_1^*-u_2^*=0$ on $T^+$, note that $u_1^*-u_2^* \in H_{loc}^2(\Omega)$ by Theorem \ref{Th.k.2}, then $\Delta(u_1^*-u_2^*)= 0$ a.e. on $T^+$ \cite[(Corollary 1.21, Chapter 1)]{NonlinearPotential}. Furthermore, we can get $Lu_1^*=Lu_2^*$ a.e. on $T^+$. Together with equation \eqref{Se3.1}, we obtain that $(\lambda_2-\lambda_1)u_1^*=0$ a.e. on $T^+$, in view of result $(4)$ of Lemma \ref{Le.3.6}, then $\lambda_2=\lambda_1$, it is a contradictory. 

{\it Step 3}. If $\lambda_2(V^*)$ is simple, for any Lebesgue point $x_0 \in \omega:= \{x\in \Omega\mid V^*(x)=m\}$ and $G_j$  is a sequence of sets containing $x_0$, we see that $P(x)=\chi_{G_j}$ is  a right admissible perturbation  for sufficiently small non-negative $t$, and  repeating the proof of Step 1 then we obtain $|u_2^*|^2-|u_1^*|^2\ge 0$ on $\omega$. Similarly, we can receive the properties of $\omega^c$ employing the same technique for $V(x)=M$. According to the  normalization condition $ \|u_1^*\|_{L^2(\Om)}^2=\|u_2^*\|_{L^2(\Om)}^2=1$,
 we see that
    \begin{equation}\label{202306091626}
  \int_\Omega |u_2^*|^2-|u_1^*|^2dx=0
  \end{equation} 
 we observe that $\omega$ and $\omega^c$ are non-empty. Indeed, if the set $\omega^c$ is empty, we must have $|u_2^*|^2-|u_1^*|^2\ge 0$ on $\Omega$, this is in contradiction with \eqref{202306091626}. Similarly, if $\omega$ is empty, it will also cause contradiction. 
  
{\it Step 4}. Suppose $\lambda_2(V^*)$ is $r$-fold degenerate, we first realize that the set $T$ mentioned in Step 1 is a subset of zero measure, and we will give a proof by contradiction. In fact, $\lambda_2(V^*)$ will split into several eigenvalue branches $\lambda_{2,m}$ under an admissible perturbation $P(x)$, each branch is an analytic function for small $t$
at $t=0$, and by \eqref{Le.4.2022} 
\begin{equation}\label{Se.4.11}
\frac{d\Gamma_{m}(V+tP(x))}{dt} \Big|_{t=0}=\int_\Omega P(x)\left(|u_{2,m}^*|^2-|u_1^*|^2\right) dx,	
\end{equation}
where $\Gamma_{m}=\lambda_{2,m}-\lambda_1$ and the orthonormal eigenfunctions $u_{2,m}^*$  are specially chosen so that $\int_\Omega u_{2,j}^* P(x)u_{2,m}^*dx =0$ for $j \neq m$, $m=1,2,\cdots,r$.

 For any given proper admissible perturbation $P(x)$, if there exists $m$ such that   
 \begin{equation*}
  \frac{d\Gamma_{m}(V+tP(x))}{dt} \Big|_{t=0}>0 ,
\end{equation*}
 we would obtain that 
 \begin{equation}\label{202306091903}
 \Gamma(L_{t_0})\le \Gamma_m(t_0)< \Gamma_m(0)=\Gamma(0)	
 \end{equation}
for some negative $t_0$, however, $\Gamma(L_{0})$ is a minimum, so this situation is excluded. Analogously, if there exists $m$ such that   $\frac{d\Gamma_{m}(V+tP(x))}{dt} \Big|_{t=0}<0$, we would obtain that \eqref{202306091903}  for some positive $t_0$, this situation was also ruled out. Therefore,  the integral \eqref{Se.4.11} must vanish.

Suppose $u$ is any normalized eigenfunction in the eigenspace for $\lambda_2(V^*)$, then 
 \begin{equation*}
\begin{split}
&u^*=\sum\limits_{i=1}^{r} \alpha_i u_{2,i}^*,\\
&\sum\limits_{i=1}^{r} |\alpha_i|^2=1,
\end{split} 
\end{equation*}
where $\alpha_i \in \mathbb{R}$. According to the above conclusion, we can see that 
 \begin{equation}\label{Se.4.12}
 	\begin{split}
 	\int_\Omega P(x)\left(|u^*|^2-|u_1^*|^2\right) dx &=	\int_\Omega P(x) \left(\sum\limits_{i=1}^{r} |\alpha_i|^2 |u_{2,i}^*|^2-|u_1^*|^2\right)dx\\
 	&=	\int_\Omega P(x) \left(\sum\limits_{i=1}^{r} |\alpha_i|^2 |u_{2,i}^*|^2-\sum\limits_{i=1}^{r} |\alpha_i|^2|u_1^*|^2\right)dx\\
 	&=\sum\limits_{i=1}^{r} |\alpha_i|^2 \int_\Omega P(x) \left(|u_{2,i}^*|^2-|u_1^*|^2\right) dx =0.
 	\end{split} 
 \end{equation}
 As demonstrated in Step 1 and Step 2, we know the set $T$  is a subset of zero measure from \eqref{Se.4.12} by selecting the appropriate $P(x)$, that is,
 $V^*=m\chi_{\omega}+M\chi_{\omega^c}$. By the same way, we have 
\begin{equation}\label{Se.4.13}
|u^*|^2\ge |u_1^*|^2 \ \mbox{on} \ \omega,  \quad  |u^*|^2\le |u_1^*|^2 \ \mbox{on} \ \omega^c,	
\end{equation}
and $\omega$ and $\omega^c$ are non-empty.   

Take advantage of the Theorem \ref{Th.k.2}, we find that the weak solution of problem \eqref{Se3.1}  belongs to $H_{\rm loc}^2(\Omega)$, furthermore, the weak solution is continuous for any subset $\Omega'\subset \subset \Omega$ by Sobolev Embedding Theorem \cite{EvansPartial,gilbarg2015elliptic}. Then we may find a   interior point  $x_0$ on $\omega$ such that $u_{2,1}(x_0) \neq 0$ and $u_{2,2}(x_0) \neq 0$, where $u_{2,1}$ and $u_{2,2}$ are orthogonal on $\Omega$. Consider a nonzero vector $(\alpha_1, \alpha_2 )\in \mathbb{R}^2$, $\alpha_1^2+\alpha_2^2=1$, $\alpha_1 \neq 0,\alpha_2 \neq 0$  and $\alpha_1u_{21}(x_0)+\alpha_2u_{22}(x_0)=0$. Consider the normalized eigenfunction $u=\alpha_1u_{21}+\alpha_2u_{22}$, we have $u(x_0)=0$ inside $\omega$, this is incompatible with the fact of \eqref{Se.4.13}. Therefore, the second eigenvalue $\lambda_2(V^*)$ cannot be degenerate. 
\end{proof}

\section*{Acknowledgements}
This work was supported by the National Natural Science Foundation of China, the Science-Technology Foundation of Hunan Province.

\section*{Competing interests declaration}
We declare that we have no conflict of interest and we do not have any commercial or associative interest that represents a conflict of interest in connection with the work submitted.

\normalem

\bibliographystyle{plain}
\bibliography{ref.bib}

\end{document}